\let\Algorithm\algorithm
\renewcommand\algorithm[1][]{\Algorithm[#1]\setstretch{1.2}}
\def\href#1#2{#2}
\def\BibTeX{{\rm B\kern-.05em{\sc i\kern-.025em b}\kern-.08em
    T\kern-.1667em\lower.7ex\hbox{E}\kern-.125emX}}
\begin{document}

\title{A Unified Dual Consensus Approach to Distributed Optimization with Globally-Coupled Constraints}
\author{Zixuan Liu, Xuyang Wu, Dandan Wang, and Jie Lu, \IEEEmembership{Member, IEEE}
\thanks{Z. Liu and D. Wang is with the School~of~Information Science and Technology, ShanghaiTech University, Shanghai 201210, China (e-mail: \nobreak{liuzx2022@shanghaitech.edu.cn}; wangdd2@shanghaitech.edu.cn).}
\thanks{X. Wu is with the School of System Design and Intelligent Manufacturing (SDIM), Southern University of Science and Technology, Shenzhen 518055, China (e-mail: wuxy6@sustech.edu.cn).}
\thanks{Jie Lu is with the School of Information Science and Technology, ShanghaiTech University and the Shanghai Engineering Research Center of Energy Efficient and Custom AI IC, Shanghai 201210, China (e-mail: lujie@shanghaitech.edu.cn).}
}

\maketitle

\begin{abstract}
    This article explores distributed convex optimization with globally-coupled constraints, where the objective function is a general nonsmooth convex function, the constraints include nonlinear inequalities and affine equalities, and the feasible region is possibly unbounded. To address such problems, a unified DUal Consensus Algorithm (DUCA) and its proximal variant (Pro-DUCA) are proposed, which are unified frameworks that approximate the method of multipliers applied to the corresponding dual problem in no need of a closed-form dual objective. With varied parameter settings, DUCA and Pro-DUCA not only extend a collection of existing consensus optimization methods to solve the dual problem that they used to be inapplicable to, but also aid in offering new efficient algorithms to the literature. The proposed unified algorithms are shown to achieve $O(1/k)$ convergence rates in terms of optimality and feasibility, providing new or enhanced convergence results for a number of existing methods. Simulations demonstrate that these algorithms outperform several state-of-the-art alternatives in terms of objective and feasibility errors.
\end{abstract}

\begin{IEEEkeywords}
    Constrained optimization, distributed optimization, primal-dual method, proximal algorithm.
\end{IEEEkeywords}

\section{Introduction} \label{sec:introduction}

Distributed optimization algorithms are designed to solve optimization problems over multi-agent systems. The agents aim to collaboratively minimize a global objective function, which is achieved through local computations and communications. Over recent decades, interest in this field has surged, driven by its extensive applications in areas such as distributed machine learning \cite{Ned20}, distributed model predictive control \cite{NAL18}, and network resource allocation \cite{NeS18}. 

This article addresses a challenging problem in this area, namely, {\it distributed optimization with coupled constraints} \eqref{prob:CC}. In such problems, agents must optimize a global objective while their local decision variables are subject to globally-coupled constraints, encompassing the problem structures found in all the aforementioned applications.

A common strategy for addressing problem~\eqref{prob:CC} involves solving its dual problem, which is essentially in the form of {\it consensus optimization} \cite{SLW15a}. Numerous distributed methods have been developed to tackle it, generally falling into two main categories. The first category comprises dual subgradient methods \cite{NeS18, SiJ16, FMG17, LLS20, LWY21, LBN24} and primal-dual subgradient methods \cite{MaC17, CNS14}. These methods typically utilize either weighted averaging \cite{NeS18, SiJ16, FMG17, MaC17, CNS14} or dynamic averaging \cite{LLS20, LWY21, LBN24} to achieve consensus among dual variables. The second category involves methods that re-dualize the consensus constraint on dual variables, thereby introducing additional variables. These methods proceed by either formulating a saddle-point problem, solvable via primal-dual algorithms \cite{HMS24, YXJ24} and the ADMM-based method \cite{AyH19}, or by constructing a Karush-Kuhn-Tucker (KKT) system, which is then tackled using operator-splitting methods \cite{LWY20, HeZ24} or the generalized proximal point algorithm \cite{GoZ23}. In this latter category, the problem is often regarded as distinct from the standard consensus optimization problem, leading to underutilization of the extensive tools developed for consensus optimization.

Recent research efforts have sought to bridge the gap, i.e., solve the dual problem of \eqref{prob:CC} by leveraging established consensus optimization methods. For example, Augmented Lagrangian Tracking \cite{FaP23} employs Proximal-Tracking \cite{FaP22} to address the dual problem. Similarly, IPLUX \cite{WWL23} utilizes a dual variant of P-EXTRA \cite{SLW15b} to manage coupled affine equality constraints. Additionally, \cite{LLS20} proposes a unitary distributed subgradient method designed for consensus optimization and then applies it to the dual of \eqref{prob:CC}. Despite these advancements, the attempts remain unsystematic, and these methodologies exhibit deficiencies in terms of restrictive problem formulations and limited convergence results.

In this article, we advance this line of research by introducing a novel algorithmic framework called the unified DUal Consensus Algorithm (DUCA). By virtue of approximating the method of multipliers applied to the dual problem of \eqref{prob:CC}, DUCA not only serves as a versatile framework for designing new distributed methods targeting \eqref{prob:CC}, but also systematically generalizes a collection of existing algorithms (or their special cases) \cite{FNN20, FaP23, GoZ23, WWL23}, and enables the application of various consensus optimization methods \cite{AWL18, SLW15b, MaO17, HoC17, FaP22} to the dual of \eqref{prob:CC}, which were originally inapplicable (see Section~\ref{sec:problem_formulation}). Additionally, its proximal variant Pro-DUCA is proposed to tackle \eqref{prob:CC} in a more general form by eliminating the compactness assumption on the feasible region. Moreover, both DUCA and Pro-DUCA offer theoretically guaranteed convergence rates that are competitive compared to the current state of the art.

The contributions of this article are summarized as follows.
\begin{enumerate}
    \item The proposed Pro-DUCA is able to tackle the \emph{general} form of \eqref{prob:CC} (with nonsmooth objective, nonlinear inequality constraints, affine equality constraints, and possibly unbounded feasible region), and converges asymptotically with a {\it constant stepsize}. Among all the aforementioned methods, only \cite{AyH19, GoZ23, WWL23, FaP23} achieve these two merits. However, \cite{WWL23} requires the inequality constraints to be Lipschitz continuous, and \cite{AyH19} further requires them to be smooth, noting that neither of such conditions are imposed by Pro-DUCA as well as DUCA.
    
    \item DUCA and Pro-DUCA reach $O(1/k)$ convergence rates of both objective and feasibility errors, which outperform the asymptotic convergence of Augmented Lagrangian Tracking \cite{FaP23}, match the results of DPDA-D \cite{AyH19} and IPLUX \cite{WWL23}, and is comparable to the $o(1/k)$ convergence rate with respect to a first-order optimality residual achieved by DPMM \cite{GoZ23}. 

    \item DUCA broadens the applicability of various established methods for consensus optimization \cite{AWL18, SLW15b, MaO17, HoC17, FaP22} to the dual problem of \eqref{prob:CC}, as these methods require the knowledge of a closed-form dual function. Consequently, we provide new convergence results for their primal recovery process. This approach is more systematic than the previous efforts in \cite{FaP23, WWL23, LLS20}, since each of them only facilitates one specific consensus optimization method in overcoming the above issue.
    \item DUCA and Pro-DUCA also generalize a variety of other existing algorithms (or their special cases) for solving problem~\eqref{prob:CC} \cite{FNN20, FaP23, GoZ23, WWL23}, providing enhanced convergence results for \cite{FNN20, FaP23} and supplementary results for \cite{GoZ23}.
\end{enumerate}

The outline of the article is as follows. Section~\ref{sec:problem_formulation} formulates the problem and provide motivation to the algorithm design. Section~\ref{sec:development} describes the proposed DUCA and Pro-DUCA, along with their distributed implementations. Section~\ref{sec:related} discusses how these algorithms extend and generalize previous methods. Section~\ref{sec:convergence} provides the convergence analysis, while Section~\ref{sec:numerical} presents comparative simulation results. Finally, Section~\ref{sec:conclusion} concludes the study.

\emph{Notation and Definition:} For a nonempty, convex set $X\subseteq\R^n$ and a point $x\in\R^n$, $\calP_X[x]$ denotes the projection of $x$ onto $X$ and $\mathscr{N}_X(x)\subseteq\R^n$ represents the normal cone of $X$ at $x$, i.e., $\mathscr{N}_X(x):=\{y\in\R^n\ |\ y^T(z-x)\leq 0,\ \forall z\in X\}$. If $X=\R_+^n$, we simply denote $[\cdot]_+:=\calP_X[\cdot]$. Besides, for a nonempty set $X\subseteq\R^n$, $\delta_X(\cdot)$ represents its indicator function and $X^\circ$ denotes its polar cone, which is given by $X^\circ:=\{y\in\R^n\ |\ y^T x\leq0,\ \forall x\in X\}$. 

For a matrix $A\in\R^{n\times n}$, we use $A^\dag$ and $\norm{A}$ to denote its Moore-Penrose pseudo-inverse and spectral norm. For matrices $A$ and $B$, $A\otimes B$ is their Kronecker product. For a symmetric positive semidefinite matrix $A\in\R^{n\times n}$, and a vector $z\in\R^n$, $\norm{z}:=\sqrt{z^Tz}$ and $\norm{z}_A:=\sqrt{z^TAz}$. We use $\lambda_i(\cdot)$ to denoted the $i$th largest eigenvalue of a matrix. Besides, $\diag(D_1,\ldots,D_n)$ represents the block diagonal matrix with square matrices $D_1,\ldots,D_n$ being its diagonal blocks.

For a convex function $f:\R^n\rightarrow \R$ and a point $x\in\R^n$, we use $\partial f(x)\subseteq\R^n$ to denote the subdifferential of $f$ at $x$; 
if $f$ is differentiable, then $\nabla f(x)$ is the gradient of $f$ at $x$.

\section{Problem and Motivation}\label{sec:problem_formulation}

This section formulates a class of distributed optimization problems with coupled constraints and provides the motivation of this work. 

We consider a network of $N$ agents, represented by a connected, undirected graph $\calG=(\calV, \calE)$, where $\calV=\{1,2,\ldots,N\}$ is the vertex set and $\calE\subseteq \{\{i,j\}\ |\ i,j\in\calV, i\neq j\}$ is the set of communication links.  The agents attempt to collaboratively solve the following problem over $\calG$ :
\begin{align*}
    \begin{split}
        \underset{x_i\in X_i, \forall i\in\calV}{\text{minimize}}\quad &\sum_{i\in\calV}f_i(x_i) \\
        \text{subject to}\quad &\sum_{i\in\calV} g_i(x_i) \leq \0_{m}, \\
        &\sum_{i\in\calV}h_i(x_i) = \0_p.
    \end{split} \label{prob:CC} \tag{$\calC\calC$}
\end{align*}
In this network-wide optimization problem, each node $i\in\calV$, with its local decision variable $x_i\in \R^{d_i}$, has access to the following local components in the global objective and constraints: 1) the local objective function $f_i:\R^{d_i}\rightarrow\R$, 2)~the local inequality constraint function $g_i=[g_{i1},\ldots,g_{im}]^T$, where $g_{ij}:\R^{d_i}\rightarrow\R$, 3) the local equality constraint function  $h_i:\R^{d_i}\rightarrow\R^p$,
and 4) the local constraint set $X_i\subseteq\R^{d_i}$. 

The problem solving process is required to be fully decentralized, i.e., each agent can only communicate with its neighbors in the neighborhood $\calN_i:=\{j\ |\ \{i,j\}\in\calE\}$. The challenge here is to handle the coupled inequality and equality constraints, which make the local decisions intertwined.

We attempt to exploit the underlying separability of problem~\eqref{prob:CC} by virtue of duality. The Lagrange dual of the above problem is given by 
\begin{gather}
    \underset{\mu\in\R^m_+, \lambda\in\R^p}{\text{minimize}}\quad -q(\mu, \lambda):=-\sum_{i\in\calV}q_i(\mu,\lambda), \tag{$\calD$}\label{prob:D}
\end{gather}
where $q_i(\mu,\lambda):=\inf_{x_i\in X_i}f_i(x_i) + \la\mu,g_i(x_i)\ra + \la\lambda,h_i(x_i)\ra$. 
Here, the dual function $q(\mu,\lam)$ is essentially nonsmooth. 

We impose the following standard assumptions on problem~\eqref{prob:CC}.
\begin{assumption} \label{asp:problem_structure}
Problem \eqref{prob:CC} satisfies the following:
    \begin{enumerate}
        \item For each $i\in\calV$, $X_i$ is a convex set.
        \item For each $i\in\calV$, $f_i$, $g_i$ are convex functions on $X_i$, and $h_i$ is an affine function on $X_i$. 
        \item There exists at least one optimal solution to \eqref{prob:CC}.
    \end{enumerate}
\end{assumption}

\begin{assumption}[Slater's condition] \label{asp:slater}
    There exist $\tilx_1,\tilx_2,\ldots,\linebreak\tilx_N$ such that $\tilx_i\in \mathrm{relint}X_i$ $\forall i\in\calV$, $\sum_{i\in\calV}g_i(\tilx_i)<\0_m$ and $\sum_{i\in\calV}h_i(\tilx_i) = \0_p$. 
\end{assumption}

It is important to note that we do not impose any smoothness assumption on $f_i$'s and $g_i$'s, neither require $X_i$'s to be compact. Thus this form of \eqref{prob:CC} is more general than those in \cite{MaC17, CNS14, HMS24, FMG17, WWL23, LWY21, LLS20, LWY20, AyH19, NeS18, SiJ16}. 

According to \cite[Proposition 5.3.5]{Ber09}, the above assumptions guarantee zero duality gap and the existence of a primal-dual optimal pair $(\x^\star, (\mu^\star, \lam^\star))\in\R^{\sum d_i}\times\R_+^{m}\times\R^p$ of the nonsmooth convex optimization problem~\eqref{prob:CC} satisfying
\begin{align} \label{eq:optimal_primal_dual_pair1}
    \begin{split}
        \x^\star \in \arg\min_{\x\in X} f(\x) + \la\mu^\star, (\1_N\otimes I_m)^Tg(\x)\ra \\
        + \la\lam^\star, (\1_N\otimes I_p)^Th(\x)\ra,
    \end{split}
\end{align} 
where $\x:=[x_1^T,x_2^T,\ldots,x_N^T]^T\in\R^{\sum d_i}$, $f(\x) := \sum_{i\in\calV} f_i(x_i)$, $g(\x):=[g_1(x_1)^T, \ldots, g_N(x_N)^T]^T$, $h(\x):=[h_1(x_1)^T, \ldots, h_N(x_N)^T]^T$, and $X:=X_1\times X_2\times\cdots\times X_N$ is the Cartesian product of the local constraint sets.

Subsequently, we derive an equivalent form of the dual problem~\eqref{prob:D}, which facilitates our algorithm development in Section~\ref{sec:development}. 
We first assign each node $i$ a local dual variable $y_i:=[\mu_i^T,\lam_i^T]$, where $\mu_i\in\mathbb{R}^m$ and $\lam_i\in\mathbb{R}^p$ are node $i$'s estimates of the global dual variables $\mu$ and $\lam$ in problem~\eqref{prob:D}, respectively, and consider
\begin{align}
    \label{prob:D_prime} \tag{$\calD'$}
    \begin{split}
        \underset{\y\in\calK}{\text{minimize}}\quad & -\tilq(\y) := -\sum_{i\in\calV}q_i(y_i) \\
        \text{subject to}\quad &\quad\ \tilH^{\half}\y = \0.
    \end{split}
\end{align}
Here, $\y:=[y_1^T,y_2^T,\ldots,y_N^T]^T$ and we separate $\y$ into two parts $\y_\mu:=[\mu_1^T,\mu_2^T,\ldots,\mu_N^T]^T\in\R^{Nm}$ and $\y_\lam:=[\lam_1^T,\lam_2^T,\ldots,\lam_N^T]^T\in\R^{Np}$. With the abuse of notation, we let $q_i(y_i):=q_i(\mu_i,\lam_i)$. Also, the matrix $\tilH\in\R^{N(m+p)\times N(m+p)}$ is a symmetric positive semidefinite matrix such that $\mathrm{Null}(\tilH)=\{\y\ |\ y_1=\cdots=y_N\}$, so that the equality constraint in \eqref{prob:D_prime} forces all the $y_i$'s to be identical. Moreover, the constraint set $\calK$ is given by $\calK:=\{\y\in\R^{N(m+p)}\ |\ \mu_i\in\R_+^m,\ \forall i\in\calV\}$, or equivalently, $\calK:=\{\y\in\R^{N(m+p)}\ |\ \y_\mu \in\R_+^{Nm}\}$. Consequently, problem~\eqref{prob:D_prime} is equivalent to \eqref{prob:D}.

Note that \eqref{prob:D_prime} is in the form of \emph{consensus optimization} \cite{SLY14}, i.e., finding a consensus that minimizes the sum of certain local functions associated with the nodes. Nevertheless, a large volume of methods for consensus optimization that dualize the consensus constraint are inapplicable (e.g., \cite{AWL18, SLW15b, MaO17,HoC17}), as they typically need to evaluate a proximal map of the nonsmooth component of the objective function at each iteration. In our case, this nonsmooth part is the dual function $\tilq$. However, obtaining an explicit form of the dual function is generally difficult. For instance, consider the simple problem:
\begin{align*}
    \underset{x\in\mathbb{R}^d}{\text{minimize}}\quad \norm{Ax-b}_1\quad\text{subject to}\quad Cx=d.
\end{align*}
The corresponding dual function is $\inf_x\norm{Ax-b}_1+\lambda^T(Cx-d)$, which is challenging to express explicitly. Consequently, directly evaluating its proximal map is infeasible. For the same reason, applying the methods mentioned above to \eqref{prob:D_prime} is not straightforward. Later this issue will be overcome by our algorithm design.

\section{Algorithm Development} \label{sec:development}

In this section, we present two novel methods for solving problem~\eqref{prob:CC} and \eqref{prob:D_prime}. The first method tackles the challenge of computing the proximal map for a dual function lacking an explicit expression. The second method is a proximal variant of the first one, which expands the solvable problem range.

\subsection{Review of Approximated Method of Multipliers (AMM)}

As is mentioned in the last section, existing consensus optimization methods are often inapplicable to problem~\eqref{prob:D_prime}. To overcome this issue, we introduce an indirect approach direct computation. 

Subsequently, we briefly introduce AMM \cite{WuL23}, which serves as the cornerstone of our approach. Applying AMM to \eqref{prob:D_prime} yields the following algorithmic form\footnote{We set the surrogate function $u^k(\y)$ in \cite{WuL23} as $\norm{\y-\y^k}_A^2$.}: starting from arbitrary $\y^0\in\calK$ and $\z^0\in\R^{N(m+p)}$, 
    \begin{align}
        \begin{split}
            \y^{k+1} &= \arg\min_{\y\in\calK} \Big\{ - \tilq(\y) + \la\tilH^\half\z^k, \y\ra \\
            &\qquad\qquad\qquad + \frac{\rho}{2}\norm{\y}_H^2 + \half\norm{\y-\y^k}_{A}^2 \Big\}
        \end{split} \label{eq:AMM_y_update}\\
        \z^{k+1} &= \z^k + \rho \tilH^\half\y^{k+1},\ \forall k\geq 0. \label{eq:AMM_z_update}
    \end{align}

The above form of AMM can be viewed as being induced from the proximal method of multipliers \cite{Roc76}, but has three weight matrices  $A=P_A\otimes I_{m+p}$, $H=P_H\otimes I_{m+p}$, and $\tilH=P_\tilH\otimes I_{m+p}$ to provide more flexibility, where $P_A, P_H, P_\tilH\in\R^{N\times N}$.
Later in Section~\ref{subsec:implementations}, we will further impose neighbor-spare sparse structures on these matrices to enable distributed implementations. For now, we assume some basic properties of them to aid our algorithm design.

\begin{assumption} \label{asp:matrices}
    The matrices $P_A, P_H$ and $P_\tilH$ are symmetric positive semidefinite and satisfy that $P_A+\rho P_H\succ 0$, $P_H\succeq P_\tilH$, and  $\mathrm{Null}(P_H)= \mathrm{Null}(P_\tilH) = \mathrm{span}(\1_N)$. 
\end{assumption}

\begin{remark}
    The above assumption ensures that 1) $H$, $\tilH$ and $A$ are symmetric positive semidefinite; 2) $A+\rho H\succ 0$, $H\succeq \tilH$, and $\mathrm{Null}(H)=\mathrm{Null}(\tilH)=S$, where 
    \begin{gather}
        S := \{ [y_1^T,\ldots, y_N^T]^T \in \R^{N(m+p)}\ |\ y_1 = \cdots = y_N \}.\label{eq:nullspace_S}
    \end{gather}
\end{remark}

Note that the updates \eqref{eq:AMM_y_update}--\eqref{eq:AMM_z_update} are not implementable, since the explicit expression of $\tilq$ is generally inaccessible.

\subsection{DUCA with Indirect Computation of \eqref{eq:AMM_y_update}}\label{subsec:DUCA}
In the sequel, we derive a realizable algorithmic form of \eqref{eq:AMM_y_update}-\eqref{eq:AMM_z_update}. 
To this end, notice that from the first-order optimality condition \cite[Proposition~5.4.7]{Ber09}, \eqref{eq:AMM_y_update} is equivalent to 

\begin{align}
    \label{eq:optimal_condition_derivation}
    \begin{split}
        \0 &\in -\partial \tilq(\y^{k+1}) + (A+\rho H)\y^{k+1}  \\
    & \qquad \qquad - A\y^k + \tilH^\half\z^k + \mathscr{N}_{\calK}(\y^{k+1}),
    \end{split}
\end{align}
where $\mathscr{N}_{\calK}(\y^{k+1})$ is the normal cone of $\calK$ at $\y^{k+1}$. 

\begin{assumption}\label{asp:compact}
    For each $i\in\calV$, $X_i$ is compact.
\end{assumption}

Assumption~\ref{asp:compact} implies that $X=X_1\times X_2\times\cdots\times X_N$ is also compact. Under this assumption, by applying Danskin's theorem \cite{Ber16} to 
\begin{equation}
    \tilq(\y^{k+1}) = \min_{\x\in X} f(\x) + \la\y^{k+1}, \tilg(\x)\ra, \label{eq:danskin}
\end{equation}
we obtain $-\partial\nobreak \tilq(\y^{k+1}) = \{-\tilg(\x) |\ \x\in \calX(\y^{k+1})\}$, where
$\tilg(\x)$ denotes $[(g_1(x_1))^T, (h_1(x_1))^T, \ldots, (g_N(x_N))^T, \linebreak(h_N(x_N))^T]^T$,
and $\calX(\y^{k+1})$ is the set of all the minimizing points in \eqref{eq:danskin}. This set is guaranteed to be nonempty and compact according to Weierstrass' theorem \cite{Ber16}, and convex due to the convexity of $f$, $g$ and $h$.

As a result, \eqref{eq:optimal_condition_derivation} is equivalent to the following statement: given $\y^{k+1}\in\R^{N(m+p)}$, there exist $\x^{k+1}\in \calX(\y^{k+1})$ and $-\bsig^{k+1}\in \mathscr{N}_\calK(\y^{k+1})$ such that
\begin{align}
    \begin{split}
        \y^{k+1} &= D^{-1}\big(A\y^k-\tilH^\half\z^k
         +\tilg(\x^{k+1})+\bsig^{k+1}\big),
    \end{split} \label{eq:y_update_derivation1}
\end{align}
where $D:=A+\rho H$. Note that $D=(P_A+\rho P_H)\otimes I_{m+p}$ and $D\succ 0$ by Assumption~\ref{asp:matrices}. To realize \eqref{eq:y_update_derivation1} in practice, we further impose a diagonal assumption on $D$.

\begin{assumption}\label{asp:diagonal}
    The matrix $P_A+\rho P_H$ is diagonal. 
\end{assumption}

The following lemma presents the indirect computation strategy and establishes our first algorithm.

\begin{lemma}\label{lem:compute_x}
    Suppose Assumption~\ref{asp:problem_structure}-\ref{asp:diagonal} hold. Given $\y^k$ and $\z^k$, then there exist variables $\y^{k+1}\in\R^{N(m+p)}$, $\x^{k+1}\in X$ and $\bsig^{k+1}\in\R^{N(m+p)}$ satisfying: $\x^{k+1}\in \calX(\y^{k+1})$, $-\bsig^{k+1}\in \mathscr{N}_\calK(\y^{k+1})$ and \eqref{eq:y_update_derivation1}. Moreover, such $\x^{k+1}$ and $\y^{k+1}$ can be sequentially computed by
    \begin{align}
        \begin{split}
            \x^{k+1} &\in \arg\min_{\x\in X} \bigg\{f(\x) + \frac{1}{2}\Big\Vert
            \calP_\calK\big[A\y^k-\tilH^\half\z^k\\ 
            &\qquad\qquad\qquad\qquad\qquad\qquad\qquad +\tilg(\x)\big]
        \Big\Vert_{D^{-1}}^2 \bigg\}, \label{eq:alg1_x_update} 
        \end{split}
        \\
        \y^{k+1} &= \calP_\calK\Big[D^{-1}\big(A\y^k-\tilH^\half\z^k+\tilg(\x^{k+1}) \big)\Big]. \label{eq:alg1_y_update} 
    \end{align}
\end{lemma}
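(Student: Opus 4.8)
The plan is to establish the two assertions of the lemma separately: first that a triple $(\y^{k+1},\x^{k+1},\bsig^{k+1})$ with the stated inclusions and \eqref{eq:y_update_derivation1} exists, and then that the sequential scheme \eqref{eq:alg1_x_update}--\eqref{eq:alg1_y_update} produces exactly such a triple. For existence I would simply note that the program \eqref{eq:AMM_y_update} is strongly convex (its $\y$-quadratic has Hessian $A+\rho H=D\succ0$ by Assumption~\ref{asp:matrices}), so it admits a unique minimizer $\y^{k+1}$; the equivalences already recorded in \eqref{eq:optimal_condition_derivation}--\eqref{eq:y_update_derivation1}, obtained via Danskin's theorem under Assumption~\ref{asp:compact}, then furnish the companions $\x^{k+1}\in\calX(\y^{k+1})$ and $-\bsig^{k+1}\in\mathscr{N}_\calK(\y^{k+1})$. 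The substance of the lemma is therefore to show that the two displayed formulas are correct and mutually consistent.

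The crux is a projection identity enabled by Assumption~\ref{asp:diagonal}. Since $P_A+\rho P_H$ is diagonal and positive definite, $D=(P_A+\rho P_H)\otimes I_{m+p}$ is diagonal with positive entries, while $\calK$ is separable across coordinates, each factor being $\R_+$ (for the $\mu$-entries) or $\R$ (for the $\lam$-entries). Consequently the $D$-weighted projection onto $\calK$ agrees coordinatewise with the Euclidean one, i.e. $\arg\min_{\y\in\calK}\half\norm{\y-D^{-1}v}_D^2=\calP_\calK[D^{-1}v]$ for every $v$. Writing $v^{k+1}:=A\y^k-\tilH^\half\z^k+\tilg(\x^{k+1})$, the relation \eqref{eq:y_update_derivation1} rearranges to $v^{k+1}-D\y^{k+1}=-\bsig^{k+1}\in\mathscr{N}_\calK(\y^{k+1})$, which is precisely the optimality condition identifying $\y^{k+1}$ as that weighted projection; hence $\y^{k+1}=\calP_\calK[D^{-1}v^{k+1}]$, which is \eqref{eq:alg1_y_update}, and conversely defining $\bsig^{k+1}:=D\y^{k+1}-v^{k+1}$ recovers $-\bsig^{k+1}\in\mathscr{N}_\calK(\y^{k+1})$ and \eqref{eq:y_update_derivation1} by construction.

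For the $\x$-update I would pass to the envelope $\theta(u):=\half\norm{\calP_\calK[u]}_{D^{-1}}^2$ and verify, by the same coordinatewise computation, the dual representation $\theta(u)=\max_{\y\in\calK}\{\la\y,u\ra-\half\norm{\y}_D^2\}$, whose unique maximizer is $D^{-1}\calP_\calK[u]$; thus $\theta$ is continuously differentiable with $\nabla\theta(u)=D^{-1}\calP_\calK[u]$. The objective of \eqref{eq:alg1_x_update} is then $f(\x)+\theta(v(\x))$ with $v(\x):=A\y^k-\tilH^\half\z^k+\tilg(\x)$, which the representation exhibits as the partial maximization over $\y\in\calK$ of the convex--concave function $f(\x)+\la\y,v(\x)\ra-\half\norm{\y}_D^2$ (this is $-L$ up to a constant, where $L$ is the saddle function produced by substituting $-\tilq(\y)=\max_{\x\in X}\{-f(\x)-\la\y,\tilg(\x)\ra\}$ into \eqref{eq:AMM_y_update}). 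Taking $\x^{k+1}$ to be any minimizer of \eqref{eq:alg1_x_update} and $\y^{k+1}=\nabla\theta(v(\x^{k+1}))=\calP_\calK[D^{-1}v^{k+1}]$ the associated inner maximizer, the chain rule for the composition of the smooth convex $\theta$ with $\tilg$ yields $\0\in\partial f(\x^{k+1})+\partial_\x\la\y^{k+1},\tilg(\x^{k+1})\ra+\mathscr{N}_X(\x^{k+1})$, which is exactly the optimality condition defining $\calX(\y^{k+1})$; hence $\x^{k+1}\in\calX(\y^{k+1})$, and the circular coupling between $\x^{k+1}$ and $\y^{k+1}$ is resolved.

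The step I expect to be the main obstacle is the rigorous chain rule through the nonsmooth $\tilg$: the inner subgradients $\partial g_i$ are paired against the $\mu$-components of $\y^{k+1}$, which are nonnegative because they arise from projection onto $\R_+$, while the affine $h_i$ are paired against unconstrained $\lam$-components, so convexity of the composition is preserved and one must justify $\partial_\x[\theta\circ v](\x^{k+1})=\partial_\x\la\y^{k+1},\tilg(\x^{k+1})\ra$. I would obtain this either from the max-function subdifferential rule applied to the dual representation of $\theta$, using uniqueness of the inner maximizer to collapse the convex hull to the single gradient $\y^{k+1}$, or from the composition rule for a $C^1$ convex outer function with sign-compatible convex inner functions. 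The remaining bookkeeping, namely the coordinatewise projection identity and its reliance on Assumption~\ref{asp:diagonal}, is routine but must be stated carefully, since it is precisely what legitimizes the appearance of the \emph{Euclidean} projection $\calP_\calK$ in \eqref{eq:alg1_x_update}--\eqref{eq:alg1_y_update}.
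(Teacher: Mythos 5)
Your proposal is correct and takes essentially the same route as the paper: your weighted-projection characterization of the $\y$-update is exactly the paper's entrywise Moreau-type split of $A\y^k-\tilH^\half\z^k+\tilg(\x^{k+1})$ into \eqref{eq:y_update_derivation2}--\eqref{eq:sig_update_derivation1} (both hinging on $D$ being diagonal positive and $\calK$ separable, i.e.\ Assumption~\ref{asp:diagonal}), and the chain-rule step you flag as the main obstacle is resolved by the paper exactly via your second proposed fix --- the composition rule for a $C^1$ monotone convex outer function with convex inner functions, which is its Lemma~\ref{lem:equiv_update_x}, with Lemma~\ref{lem:convex_formula} supplying convexity --- after substituting the projection formula into the optimality condition of $\calX(\y^{k+1})$. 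The only cosmetic differences are that you get existence from strong convexity of \eqref{eq:AMM_y_update} plus Danskin's theorem, whereas the paper invokes Weierstrass on \eqref{eq:alg1_x_update} using Assumption~\ref{asp:compact}, and that your dual (envelope) representation of the penalty $\theta$ is the same identity the paper records later as \eqref{eq:lagrangians_relation} for its convergence analysis rather than in this proof.
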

\begin{proof}
    See Appendix~\ref{app:lem:compute_x}.
\end{proof}

This lemma, together with \eqref{eq:y_update_derivation1}, indicates that $\y^{k+1}$ given by \eqref{eq:AMM_y_update}, which cannot be directly computed, can now be computed via \eqref{eq:alg1_x_update} and \eqref{eq:alg1_y_update}. We refer to the algorithm described by \eqref{eq:alg1_x_update}, \eqref{eq:alg1_y_update} and \eqref{eq:AMM_z_update}, with initialization $\y^0\in\calK$ and $\z^0\in\R^{N(m+p)}$, as the unified DUal Consensus Algorithm (DUCA), which unifies various existing methods for consensus optimization.

\begin{remark}
    When Assumption~\ref{asp:problem_structure}-\ref{asp:diagonal} hold, the sequences $\{\y^k,\linebreak \z^k\}_{k\geq 0}$ generated by \eqref{eq:AMM_y_update} and \eqref{eq:AMM_z_update} are identical to the corresponding sequences generated by DUCA. This indirect approach allows methods generalized by AMM (originally developed for consensus optimization) to be applied to the dual problem of constrained convex optimization problems of the form \eqref{prob:D_prime}, even when there is no closed-form dual function. Examples of such algorithms include PGC \cite{HoC17}, P-EXTRA \cite{SLW15b}, DPGA \cite{AWL18}, distributed ADMM \cite{MaO17}, and Proximal-Tracking \cite{FaP22}. 
    Furthermore, our convergence analysis of DUCA in Section~\ref{sec:convergence} provides results for the primal recovery process of all these algorithms.
    
\end{remark}

\subsection{Pro-DUCA}

The compactness assumption on $X_i$'s, i.e., Assumption~\ref{asp:compact}, is necessary to ensure that \eqref{eq:alg1_x_update} has a solution. However, this could make our problem formulation restrictive. To eliminate this assumption, we modify DUCA by adding a proximal term to \eqref{eq:alg1_x_update} as follows:
\begin{align}
    \begin{split}
        \x^{k+1} &= \arg\min_{\x\in X} \Big\{f(\x) + \frac{1}{2}\norm{
            \calP_\calK\big[A\y^k-\tilH^\half\z^k\\
        &\qquad\qquad\qquad\ \ +\tilg(\x)\big]}_{D^{-1}}^2+ \frac{\alpha}{2}\norm{\x-\x^k}^2\Big\} \label{eq:alg2_x_update},
    \end{split}
\end{align}
where $\alpha>0$. This modification adds no extra computational cost. Instead, it makes the minimization problem in \eqref{eq:alg2_x_update} $\alpha$-strongly convex, yielding a unique solution $\x^{k+1}$ and enhances computational efficiency \cite{Roc76}. The proposed proximal algorithm described by \eqref{eq:alg2_x_update}, \eqref{eq:alg1_y_update}, \eqref{eq:AMM_z_update}, with initialization $\y^0\in\calK$ and $\z^0\in\R^{N(m+p)}$ is called Pro-DUCA.

\begin{remark}
    This modification makes Pro-DUCA no longer equivalent to \eqref{eq:AMM_y_update}-\eqref{eq:AMM_z_update}. Accordingly, its convergence analysis cannot leverage the existing results of AMM and its special cases in \cite{WuL23}. We will provide new analysis in Section~\ref{sec:convergence}.
\end{remark}

\begin{algorithm}[tbp]
    \caption{DUCA/Pro-DUCA: Single-Exchange}
    \label{alg:implementation1}
    \begin{algorithmic}[1]
        \REQUIRE Each agent $i$ selects arbitrarily $x_i^0\in\R^{d_i}$, $y_i^0\in\R^m_+\times\R^p$, $v_i^0=\0_{m+p}$, and sends $y_i^{0}$ to all $j\in\calN_i$.
        \FORALL{$k\geq 0$}
            \FOR{$i=1, 2,\ldots, N$ (agents compute in parallel)} 
                \STATE $\tily_i^k = d_i' y_i^k  - \rho\sum\limits_{j\in\calN_i\cup\{i\}}\calL_{ij}y_j^k-v_i^k$.
                \STATE $x_i^{k+1}\in\arg\min_{x_i\in X_i} L_i^k(x_i, \tily_i^k)$.
                \STATE $y_i^{k+1} = \frac{1}{d_i'}\begin{bmatrix}
                    [\tilde{\mu}_i+g_i(x_i^{k+1})]_+\\
                    \tilde{\lam}_i + h_i(x_i^{k+1})
                \end{bmatrix}$.
                \STATE Agent $i$ sends $y_i^{k+1}$ to all $j\in\calN_i$.
                \STATE $v_i^{k+1} = v_i^k + \rho\sum\limits_{j\in\calN_i\cup\{i\}}\calL_{ij}y_j^{k+1}$.
            \ENDFOR
        \ENDFOR
    \end{algorithmic}
\end{algorithm}

\subsection{Distributed Implementations}\label{subsec:implementations}

We propose two distributed implementations for DUCA and Pro-DUCA with different parameter settings.

Consider any symmetric weight matrix $W=\{w_{ij}\}\in\R_+^{N\times N}$ such that
\begin{align}
    w_{ij} \begin{cases}
        > 0, & \text{ if } \{i,j\}\in\calE, \\ 
        \geq 0, & \text{ if } i=j,  \\
        0, & \text{otherwise},
    \end{cases}    \label{eq:penalty_mat}
\end{align}
and a diagonal matrix $\Lambda=\diag(\ell_1,\ldots,\ell_N)\in\R^{N\times N}$, where $\ell_i=\sum_{j\in\calV}w_{ij}\ \forall i\in\calV$. The graph Laplacian is defined as $\calL=\Lambda-W$, which is positive semidefinite, and has nullspace equal to $\mathrm{span}(\1_N)$ since the graph is connected \cite{GoR01}.

\subsubsection{Single-Exchange Implementation}

We set the algorithm parameters as follows: $P_H=P_\tilH=\calL$, and $P_D=\diag(d_1',\linebreak \ldots, d_N')$ is chosen to satisfy that $P_D\succ 0, P_A = P_D-\rho P_H\succeq 0$. 
\footnote{An option is to let $P_D=c \rho\Lambda$ with $c\geq 2$. Then $P_A=\rho(c-2)\Lambda+\rho\Lambda(I+\Lambda^{-1}W)$ will be positive semidefinite, because $\Lambda^{-1}W$ is row-stochastic and $\abs{\lam_{1}(\Lambda^{-1}W)}\leq 1$.} 
With the change of variable $\v^k := \tilH^\half\z^k$, the update \eqref{eq:AMM_z_update} of $\z^k$ becomes
\begin{gather} \label{eq:v_update}
    \v^{k+1} = \v^k + \rho\tilH\y^{k+1},
\end{gather}
and initially $\v^0\in\mathrm{range}(\tilH^\half)$. Here we simply choose $\v^0 = \0_{N(m+p)}$. With such setting, the first distributed implementation is proposed as Algorithm~\ref{alg:implementation1}. Here $\tily_i^k=[(\tilde{\mu}_i^k)^T, (\tilde{\lam}_i^k)^T]^T\in\R^{m+p}$ is an intermediate variable that equals to the $i$th component of $A\y^k-\v^k$, and $L_i^k$ is defined as
\begin{multline}
    L_i^k(x_i,y_i) = f_i(x_i) + \frac{1}{2d_i'}\biggl(\norm{[\mu_i+g_i(x_i)]_+}^2 \\+ \norm{\lam_i+h_i(x_i)}^2\biggr) + \frac{\alpha}{2}\norm{x_i-x_i^k}^2. \label{eq:alg2_AL_impl}
\end{multline}
In the formula above, we set $\alpha=0$ to implement DUCA and $\alpha>0$ to implement Pro-DUCA. 

This parameter setting corresponds to PGC \cite{HoC17}, P-EXTRA \cite{SLW15a} and DPGA \cite{AWL18}, with specific choices of $W$ and $P_D$. See \cite[Section~III-B]{WuL23} and Table~\ref{tab:param} for more details.

\subsubsection{Double-Exchange Implementation} \label{subsec:double-exchange}

We set $P_H=\calL\calM$, $P_\tilH=\calL^2$, and $P_D=\diag(d_1',\ldots, d_N')$ such that $P_D\succ 0$ and $P_A=P_D-\rho \calL\calM\succeq 0$. Here $\calM$ equals to $\calL$ (corresponding to distributed ADMM \cite{MaO17}
\footnote{Distributed ADMM \cite{MaO17} allows $W$ to be asymmetric. In that case, we set $P_H=P_\tilH=\calL^T\calL$. We assume symmetry for notation simplicity.})
or $2I-\calL$ (corresponding to Proximal-Tracking \cite{FaP22}, Augmented Lagrangian Tracking \cite{FaP23} and Tracking-ADMM \cite{FNN20}). In the later case, we need to further impose that i) $W\succeq 0$, and ii) $W$ is doubly stochastic.\footnote{\cite{FNN20,FaP22,FaP23} 
only require $W$ to be doubly stochastic, but do not require $W\succeq 0$. But if we have a doubly stochastic $W$, it is easy to make it additionally positive semidefinite: simply $\frac{I+W}{2}$ would suffice. In these cases, $P_D=\rho I$ would make Assumption~\ref{asp:matrices} holds.}
See \cite[Section~III-C]{WuL23} and Table~\ref{tab:param} for more details. 

In such setting, the second implementation is proposed as Algorithm~\ref{alg:implementation2}. Here $\tily_i^k=[(\tilde{\mu}_i^k)^T, (\tilde{\lam}_i^k)^T]^T\in\R^{m+p}$ still represents the $i$th component of $A\y^k-\tilH^\half\z^k = D\y^k - (\calL\otimes I_{m+p})\u^k$, where $\u^k=\rho(\calM\otimes I_{m+p})\y^k+\z^k$. $L_i^k$ is still defined as \eqref{eq:alg2_AL_impl}. Note that this implementation necessitates exchanging two distinct variables in each iteration, namely $y_i^k$ in Line~6, and $u_i^k$ in Line~9. Rooted inherently in all the methods it generalizes, which likewise require an equivalent exchange count, this double-exchange is an essential requirement.

\begin{algorithm}[tbp]
    \caption{DUCA/Pro-DUCA: Double-Exchange}
    \label{alg:implementation2}
    \begin{algorithmic}[1]
        \REQUIRE Each agent $i$ select arbitrarily $x_i^0\in\R^{d_i}$, $y_i^0\in\R^m_+\times\R^p$, $z_i^0=\0_{m+p}$, and sends $y_i^{0}$ to all $j\in\calN_i$.\\
        Then it computes $u_i^0=z_i^0 + \rho\sum\limits_{j\in\calN\cup\{i\}}\calM_{ij}y_j^{0}$, and sends $u_i^{0}$ to all $j\in\calN_i$.
        \FORALL{$k\geq 0$}
            \FOR{$i=1, 2,\ldots, N$ (agents compute in parallel)}
                \STATE $\tily_i^k = d_i' y_i^k - \sum\limits_{j\in\calN\cup\{i\}}\calL_{ij}u_j^k$.
                \STATE $x_i^{k+1}\in\arg\min_{x_i\in X_i} L_i^k(x_i, \tily_i^k)$.
                \STATE $y_i^{k+1} = \frac{1}{d_i'}\begin{bmatrix}
                    [(\tilde{\mu}_i^k+g_i(x_i^{k+1})]_+\\
                    \tilde{\lam}_i^k + h_i(x_i^{k+1})
                \end{bmatrix}$.
                \STATE Agent $i$ sends $y_i^{k+1}$ to all $j\in\calN_i$.
                \STATE $z_i^{k+1} = z_i^k + \rho\sum\limits_{j\in\calN\cup\{i\}}\calL_{ij}y_j^{k+1}$.
                \STATE $u_i^{k+1} = z_i^k + \rho\sum\limits_{j\in\calN\cup\{i\}}\calM_{ij}y_j^{k+1}$.
                \STATE Agent $i$ sends $u_i^{k+1}$ to all $j\in\calN_i$.
            \ENDFOR
        \ENDFOR
    \end{algorithmic}
\end{algorithm}

\section{Connections to the Existing Works}\label{sec:related}

In this section, we establish the connections between the DUCA/Pro-DUCA and existing methods, showing that DUCA generalizes \cite{HoC17,SLW15a,AWL18,MaO17,FNN20,FaP22,FaP23}, and Pro-DUCA generalizes special cases of \cite{GoZ23,WWL23}.

\subsection{Methods for Consensus Optimization}

If Assumption~\ref{asp:problem_structure}-\ref{asp:diagonal} hold, DUCA described by \eqref{eq:alg1_x_update}, \eqref{eq:alg1_y_update} and \eqref{eq:AMM_z_update} can be viewed as AMM \cite{WuL23} applied to the dual problem~\eqref{prob:D_prime}. With the indirect computation proposed in Section~\ref{subsec:DUCA}, many methods originally proposed for consensus optimization that are special cases of AMM become implementable to solve \eqref{prob:D_prime}, e.g., PGC over static networks \cite{HoC17}, P-EXTRA \cite{SLW15a}, DPGA \cite{AWL18}, and distributed ADMM \cite{MaO17}.

Proximal-Tracking \cite{FaP22} is a recently proposed method for consensus optimization. It can be viewed as the proximal counterpart of DIGing \cite{NOA17}, and is able to handle nonsmooth objectives. Using the same parameter setting as DIGing (see \cite[Section~III-C]{WuL23} and Section~\ref{subsec:double-exchange}), it is not hard to show that Proximal-Tracking is also a special case of AMM. Thus DUCA also extends its applications to problem~\eqref{prob:D_prime}.

Note that the above methods solves the dual problem~\eqref{prob:D_prime} rather than the primal problem~\eqref{prob:CC}, and all their convergence results are established for the dual sequences $\{\y^k, \z^k\}_{k\geq 0}$, dual objective error, and constraint violation in \eqref{prob:D_prime}, i.e., consensus error of $\y^k$. There has been no discussion of primal objective errors or constraint violations in \eqref{prob:CC}. To this end, we will establish convergence analysis of DUCA concerning the primal aspects, thereby contributing new insights into the primal recovery process for these methods.

\subsection{Methods for Problem~\eqref{prob:CC}}
We find that some of the existing methods for solving problem~\eqref{prob:CC} are also special cases of DUCA or Pro-DUCA, while some others are closely related to them.

\subsubsection{Augmented Lagrangian Tracking \cite{FaP23} and Tracking-ADMM \cite{FNN20}} These two methods can be viewed as specializations of DUCA, as is verified below.

Augmented Lagrangian Tracking has the following iterations: starting form $\x^0\in\R^{\sum d_i}$, $\y^0\in\R^{N(m+p)}$, $\bsig^0\in-\calK^\circ$, $\t^0 = -\tilg(\x^0)-\bsig^0$, then for $k\geq 0$, 
\begin{align}
    \begin{split}
        \x^{k+1} &= \arg\min_{x\in X} f(\x) + \frac{\rho}{2}\big\Vert\calP_\calK[W\y^k + \frac{1}{\rho}(\tilg(\x)\\
        &\qquad\qquad\qquad\qquad\qquad\quad-\tilg(\x^k)-W\t^k)]\big\Vert^2,
    \end{split} \allowdisplaybreaks\\
    \bsig^{k+1} &= -\calP_{\calK^\circ}[W\t^k-\tilg(\x^{k+1})+\tilg(\x^k)+\bsig^k-\rho W\y^k], \allowdisplaybreaks\\
    \t^{k+1} &= W\t^k - (\tilg(\x^{k+1})+\bsig^{k+1}) + (\tilg(\x^k)+\bsig^k), \label{eq:ALT_t_update} \allowdisplaybreaks\\
    \y^{k+1} &= W\y^k - \frac{1}{\rho}\t^{k+1}. \label{eq:ALT_y_update}
\end{align}

It can be shown from \eqref{eq:ALT_y_update} that $\y^{k+1}-W\y^k=W\y^k-W^2\y^{k-1}-\frac{1}{\rho}(\t^{k+1}-W\t^k)$. Using \eqref{eq:ALT_t_update} to eliminate $\t$ in this relation, we obtain the dynamic of $\y$: $\y^{k+1} = 2W\y^k-W^2\y^{k-1}+\frac{1}{\rho}\big((\tilg(\x^{k+1})+\bsig^{k+1}) - (\tilg(\x^k)+\bsig^{k})\big)$. Notice from \cite[Theorem~1]{FaP23} and \cite[Section~3.1]{FaP22} that $-(\tilg(\x^k)+\bsig^{k})$ is a subgradient of $-\tilde{q}(\cdot)+\delta_\calK(\cdot)$ at $\y^{k}$, which is consistent with the algorithm development of DUCA. Similarly, using \eqref{eq:AMM_z_update} to eliminate $\z$ in two successive iterations of \eqref{eq:relation_iterates}, one could verify that DUCA gives the same dynamic of $\y$, with $A=\rho W^2$, $D=\rho I$, $H=I-W^2$ and $\tilH=(I-W)^2$.

It is notable that Augmented Lagrangian Tracking can be viewed as applying Proximal-Tracking \cite{FaP22} to the dual problem \eqref{prob:D_prime}, which is consistent with our previous discussion that AMM generalizes Proximal-Tracking. 

As is pointed out in \cite[Section~3.4]{FaP23}, Tracking-ADMM \cite{FNN20} is a special case of Augmented Lagrangian Tracking, if there are no coupled inequality constraints in \eqref{prob:CC}. Thus it is also a special case of DUCA. In the next section, our analysis will enhance the convergence results of both of these works, as they only provide an asymptotic convergence without rates.

\subsubsection{IPLUX \cite{WWL23}}
The part of IPLUX dealing with coupled equality constraints can be viewed as ``P-EXTRA applied to the dual'' \cite[Section~III, Case~2]{WWL23}. In this case, IPLUX can be viewed as a specialization of Pro-DUCA, as it has also adds an proximal term in the primal update.

On the other hand, IPLUX deals with coupled inequality constraints by introducing an auxiliary variable, and transforming the constraints into coupled equality constraints and local inequality constraints. The local inequality constraints are further dealt by a virtual-queue-based algorithm \cite{YuN17}, which demands Lipschitz continuity for these constraints. In contrast, Pro-DUCA eliminates the need for such requirement and auxiliary variables, resulting in a simpler subproblem per iteration with fewer decision variables.

\subsubsection{DPMM \cite{GoZ23}} This method forms a Karush-Kuhn-Tucker (KKT) system of \eqref{prob:CC}, which is treated as an inclusion problem of a maximal monotone operator, and then uses ideas from the variable metric proximal point method and the prediction-correction framework to solve it.

Interestingly, although derived from different ideas, we find the algorithm structure of DPMM very similar to Pro-DUCA. However, there are some key differences:

a) Pro-DUCA allows for a broader choices of parameters and thus has stronger generalization ability. Specifically, the two weight matrices $H$ and $\tilH$ of Pro-DUCA are possibly different, and are not required to be compatible with the communication graph, while DPMM uses only one communication matrix that is compatible with the graph \cite[Assumption~1]{GoZ23}.

b) DPMM maintains a different sequence of primal variables $\{\x^k\}_{k\geq 0}$. Each time DPMM obtains $\hat{\x}^{k}$ from solving a subproblem like \eqref{eq:alg1_x_update}, it updates the primal variable $\x^{k+1}$ by $\x^{k+1} = (I-\Theta)\x^k + \Theta\hat{\x}^k$, where $\Theta$ is a diagonal matrix with each entry belonging to $(0,2)$. If $\Theta$ is set as $I$, then DPMM is special case of Pro-DUCA.

c) The convergence results are different. We will show in the next section that Pro-DUCA reaches $O(1/k)$ convergence rates in terms of both primal objective and feasibility errors; while DPMM achieves an $o(1/k)$ rate with respect to a first-order optimality residual. Moreover, DPMM enjoys a linear convergence rate when the problem satisfies more restrictive assumptions.

\section{Convergence Analysis}\label{sec:convergence}

In this section, we establishes the convergences results of DUCA and Pro-DUCA.

The Lagrange dual problem of \eqref{prob:D_prime} is 
\begin{gather}
    \underset{\z\in\R^{N(m+p)}}{\text{maximize}}\quad \big(\inf_{\y\in\calK}-\tilq(\y)+\la\z, \tilH^\half\y\ra\big). \label{prob:dual_dual}
\end{gather}
In the proposition below, we show that there is no duality gap, and provide a primal-dual optimal solution pair $(\y^\star,\z^\star)$ (we view $\y$ as the primal variable of \eqref{prob:D_prime} and $\z$ the dual variable).

\begin{proposition} \label{prop:z_star}
    Suppose that Assumption~\ref{asp:problem_structure}-\ref{asp:matrices} hold, $\x^\star, \mu^\star$ and $\lam^\star$ are defined in \eqref{eq:optimal_primal_dual_pair1}. Then there is no duality gap between \eqref{prob:D_prime} and \eqref{prob:dual_dual}. Furthermore, $(\y^\star, \z^\star)$ is a primal-dual optimal solution pair, where $\y^\star=\1_N \otimes [(\mu^\star)^T,(\lam^\star)^T]^T$, $\z^\star=(\tilH^\half)^\dag\tilg(\x^\star)$.
\end{proposition}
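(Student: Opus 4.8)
The plan is to establish both claims simultaneously by exhibiting a \emph{saddle point} of the Lagrangian of \eqref{prob:D_prime}, namely $L(\y,\z):=-\tilq(\y)+\la\z,\tilH^\half\y\ra$, over $\y\in\calK$ and $\z\in\R^{N(m+p)}$. By the saddle-point theorem, once I verify
\begin{equation*}
    L(\y^\star,\z)\le L(\y^\star,\z^\star)\le L(\y,\z^\star),\quad\forall\y\in\calK,\ \forall\z,
\end{equation*}
it follows at once that $\y^\star$ solves \eqref{prob:D_prime}, that $\z^\star$ solves \eqref{prob:dual_dual}, and that the two optimal values coincide, i.e., there is no duality gap. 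This packages the entire proposition into two inequalities.

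The left inequality is the easy half. Since $\mu^\star\in\R_+^m$, the vector $\y^\star=\1_N\otimes[(\mu^\star)^T,(\lam^\star)^T]^T$ lies in $\calK$; moreover $\y^\star\in S=\mathrm{Null}(\tilH)=\mathrm{Null}(\tilH^\half)$ by Assumption~\ref{asp:matrices} and the ensuing Remark, so $\tilH^\half\y^\star=\0$. Hence $L(\y^\star,\z)=-\tilq(\y^\star)$ is constant in $\z$ and the left inequality holds with equality; this also certifies that $\y^\star$ is feasible for \eqref{prob:D_prime}.

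The right inequality, i.e.\ that $\y^\star$ minimizes $L(\cdot,\z^\star)$ over $\calK$, carries the real work, and I would obtain it by adding two subgradient inequalities. First, reading \eqref{eq:optimal_primal_dual_pair1} blockwise (using $(\1_N\otimes I_m)^Tg(\x)=\sum_i g_i(x_i)$, and likewise for $h$) shows that $x_i^\star$ attains the infimum defining $q_i(y_i^\star)$, since $y_i^\star=[(\mu^\star)^T,(\lam^\star)^T]^T$; the elementary supporting-hyperplane argument for an infimum of affine functions then gives $-\tilg(\x^\star)\in\partial(-\tilq)(\y^\star)$, with no recourse to Assumption~\ref{asp:compact}. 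Second, using $\z^\star=(\tilH^\half)^\dag\tilg(\x^\star)$ and the fact that $\tilH^\half(\tilH^\half)^\dag$ is the orthogonal projector onto $\mathrm{range}(\tilH^\half)=S^\perp$, I compute
\begin{equation*}
    \tilg(\x^\star)-\tilH^\half\z^\star=\big(I-\tilH^\half(\tilH^\half)^\dag\big)\tilg(\x^\star)=\Pi_S\,\tilg(\x^\star),
\end{equation*}
where $\Pi_S=\tfrac1N(\1_N\1_N^T)\otimes I_{m+p}$ projects onto the consensus subspace $S$, so this vector equals $\1_N\otimes\tfrac1N[(\sum_i g_i(x_i^\star))^T,(\sum_i h_i(x_i^\star))^T]^T$. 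It then remains to check that this consensus vector lies in $\mathscr{N}_\calK(\y^\star)$. Since $\calK$ is a product of copies of $\R_+^m$ (on the $\mu$-blocks) and $\R^p$ (on the $\lam$-blocks), its normal cone factorizes, and membership reduces exactly to the primal KKT conditions: the $\lam$-blocks force $\sum_i h_i(x_i^\star)=\0_p$, while the $\mu$-blocks, via $\mathscr{N}_{\R_+^m}(\mu^\star)=\{w\le\0:\la w,\mu^\star\ra=0\}$, force $\sum_i g_i(x_i^\star)\le\0_m$ and $\la\mu^\star,\sum_i g_i(x_i^\star)\ra=0$; all three hold because $(\x^\star,(\mu^\star,\lam^\star))$ is a primal-dual optimal pair. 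Adding the subgradient inequality $-\tilq(\y)\ge-\tilq(\y^\star)-\la\tilg(\x^\star),\y-\y^\star\ra$ to the normal-cone inequality $\la\tilg(\x^\star)-\tilH^\half\z^\star,\y-\y^\star\ra\le0$ (valid for all $\y\in\calK$) and invoking $\tilH^\half\y^\star=\0$ yields $L(\y,\z^\star)\ge L(\y^\star,\z^\star)$ on $\calK$.

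The main obstacle is the middle step: correctly identifying $\tilg(\x^\star)-\tilH^\half\z^\star$ as the consensus projection $\Pi_S\tilg(\x^\star)$ — which hinges on $\mathrm{range}(\tilH^\half)=S^\perp$ following from $\mathrm{Null}(\tilH)=S$ — and then translating membership in $\mathscr{N}_\calK(\y^\star)$ back into the recognizable primal feasibility and complementary-slackness conditions supplied by the optimal pair. Once this dictionary between the dual-of-dual optimality of $(\y^\star,\z^\star)$ and the primal KKT system of \eqref{prob:CC} is in place, the remaining manipulations are routine bookkeeping.
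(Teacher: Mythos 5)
Your proposal is correct and is essentially the paper's own argument in saddle-point packaging: the paper verifies the geometric-multiplier condition of \cite[Proposition 6.1.5]{Ber16}, which is exactly your right-hand inequality, using the same three ingredients — the subgradient identification $\tilg(\x^\star)\in\partial\tilq(\y^\star)$ from \eqref{eq:optimal_primal_dual_pair1} (no Danskin/compactness needed), the identity $\tilH^\half(\tilH^\half)^\dag = $ projection onto $S^\perp$ so that $\tilg(\x^\star)-\tilH^\half\z^\star$ is the consensus average, and membership of that average in $\mathscr{N}_\calK(\y^\star)$ via primal feasibility and complementary slackness. Your left inequality corresponds to the paper's observation that $\y^\star$ is feasible ($\tilH^\half\y^\star=\0$) and optimal for \eqref{prob:D_prime}, so the two proofs coincide in substance.
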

\begin{proof}
    See Appendix~\ref{app:prop:z_star}.
\end{proof}

In the following analysis, we use $\barby^k:= \frac{1}{k}\sum_{l=1}^k\y^l$ and $\barbx^k:= \frac{1}{k}\sum_{l=1}^k \x^l$ to denote the iterate averages. We also use auxiliary variables $\v$ and $\bsig$: $\v^\star:=\tilH^\half\z^\star$,  $\v^k:=\tilH^\half\z^k\ \forall k\geq 0$ with its update formula~\eqref{eq:v_update}, and 
\begin{align}
    \bsig^{k+1} &:= -\calP_{\calK^\circ}\Big[A\y^k - \tilH^\half\z^k +\tilg(\x^{k+1})\Big]\ \forall k\geq 0,\label{eq:sig_update}
\end{align}
where $\calK^\circ\subset\R^{N(m+p)}$ is the polar cone of $\calK$. We will show in the proof of Proposition~\ref{prop:relations} that the definition of $\bsig^k$ is consistent with the one used in \eqref{eq:y_update_derivation1}.

In view that DUCA and Pro-DUCA share two identical update formulae, namely \eqref{eq:alg1_y_update} and \eqref{eq:AMM_z_update}, the iterates generated by them naturally share some key properties.

\begin{proposition} \label{prop:relations}
    Suppose that Assumption~\ref{asp:problem_structure}-\ref{asp:diagonal} hold (Pro-DUCA does not need Assumption~\ref{asp:compact}). The sequences $\{\x^k, \y^k, \z^k\}_{k\geq 0}$ generated by DUCA or Pro-DUCA satisfy the following.
    \begin{enumerate}
        \item (recursive relation of iterates) For all $k\geq 0$, 
        \begin{align}
            D\y^{k+1} = A\y^k - \tilH^\half\z^k + \tilg(\x^{k+1}) + \bsig^{k+1}. \label{eq:relation_iterates}
        \end{align} 
        
        \item (relation between accumulative constraint violation and dual iterates) For all $k\geq 1$,
        \begin{align}
            &(\1_N\otimes I_{m+p})^T\sum_{l=1}^{k}\big(\tilg(\x^l)+\bsig^l\big) \nonumber\\
            &\qquad\qquad\qquad= (\1_N\otimes I_{m+p})^T A(\y^k-\y^0).\label{eq:relation_y_A_constraints}
        \end{align}

        \item (ergodic constraint violation is bounded by dual iterates) For all $k\geq 1$, 
        \begin{align}
            \left\Vert\begin{bmatrix}
                \big[\sum_{i=1}^N g_i(\barx_i^k)\big]_+ \\
                \sum_{i=1}^N h_i(\barx_i^k)
            \end{bmatrix}\right\Vert
            \leq \frac{\sqrt{N\lam_{1}(P_A)}}{k} \norm{\y^k-\y^0}_A. \label{eq:cons_vio_bounded_by_y_A_norm}
        \end{align}

        \item (ergodic objective error is lower-bounded by ergodic constraint violation) For all $k\geq 1$, 
        \begin{align}
            f(\barbx^k) - f(\x^\star )\geq 
            - \left\Vert y^\star \right\Vert  
            \left\Vert\begin{bmatrix}
                \big[\sum_{i=1}^N g_i(\barx_i^k)\big]_+\\
                \sum_{i=1}^N h_i(\barx_i^k)
            \end{bmatrix} \right\Vert .
            \label{eq:oe_bounded_by_fe}
        \end{align}

        \item For all $k\geq 0$,
        \begin{multline}
            \la\y^{k+1}, D(\hatby^k - \y^{k+1}) +\v^\star\ra \leq \frac{1}{2}(\norm{\y^k}_A^2 - \norm{\y^{k+1}}_A^2) \\
            + \frac{1}{2\rho}(\norm{\v^k-\v^\star}_{\tilH^\dag}^2 - \norm{\v^{k+1}-\v^\star}_{\tilH^\dag}^2), \label{eq:relation_v_star_lyapunov}
        \end{multline}
        where we denote $\hatby^k:=D^{-1}(A\y^k-\tilH^\half\z^k)$.
    \end{enumerate}
\end{proposition}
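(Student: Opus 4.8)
The plan is to prove the five items in the order stated, since every later item relies on the recursive relation of item~1, and item~5 is the technical crux. For item~1 I would at the same time verify that the $\bsig^{k+1}$ fixed by \eqref{eq:sig_update} is consistent with the multiplier produced by Lemma~\ref{lem:compute_x}. Abbreviating $w^{k+1}:=A\y^k-\tilH^\half\z^k+\tilg(\x^{k+1})$, the update \eqref{eq:alg1_y_update} reads $\y^{k+1}=\calP_\calK[D^{-1}w^{k+1}]$. Assumption~\ref{asp:diagonal} makes $D=(P_A+\rho P_H)\otimes I_{m+p}$ a positive diagonal matrix, and $\calK=\R_+^{Nm}\times\R^{Np}$ is a product of the cones $\R_+$ and $\R$, each preserved by positive diagonal scaling, so $\calP_\calK[D^{-1}w^{k+1}]=D^{-1}\calP_\calK[w^{k+1}]$. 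Combining this with the Moreau decomposition $w^{k+1}=\calP_\calK[w^{k+1}]+\calP_{\calK^\circ}[w^{k+1}]$ and $\bsig^{k+1}=-\calP_{\calK^\circ}[w^{k+1}]$ gives $D\y^{k+1}=w^{k+1}+\bsig^{k+1}$, i.e.\ \eqref{eq:relation_iterates}; moreover $-\bsig^{k+1}=\calP_{\calK^\circ}[w^{k+1}]\in\mathscr{N}_\calK(\calP_\calK[w^{k+1}])=\mathscr{N}_\calK(\y^{k+1})$, the last equality holding because $\y^{k+1}$ and $\calP_\calK[w^{k+1}]$ differ only by the positive diagonal factor $D^{-1}$, which leaves the active set of $\calK$ unchanged. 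This matches \eqref{eq:y_update_derivation1}.

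Items 2--4 are then ergodic manipulations. For item~2 I would rearrange \eqref{eq:relation_iterates} into $\tilg(\x^l)+\bsig^l=D\y^l-A\y^{l-1}+\tilH^\half\z^{l-1}$, sum over $l=1,\dots,k$, and left-multiply by $(\1_N\otimes I_{m+p})^T$. Because $\mathrm{Null}(P_H)=\mathrm{Null}(P_\tilH)=\mathrm{span}(\1_N)$ (Assumption~\ref{asp:matrices}), both $(\1_N\otimes I_{m+p})^TH$ and $(\1_N\otimes I_{m+p})^T\tilH^\half$ vanish; writing $D=A+\rho H$ removes the $\rho H$ and $\tilH^\half\z$ contributions, and the surviving $A(\y^l-\y^{l-1})$ telescopes to $A(\y^k-\y^0)$, which is \eqref{eq:relation_y_A_constraints}. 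Item~3 reads the two blocks off \eqref{eq:relation_y_A_constraints}: the $\lam$-block gives $\sum_i h_i(\barx_i^k)$ as exactly $\tfrac1k$ times the corresponding block of the right-hand side (affineness of $h_i$), while the $\mu$-block uses convexity of $g_i$ together with the fact that $\calK^\circ=\{z:\ z_\mu\le\0,\ z_\lam=\0\}$ forces the $\mu$-part of each $\bsig^l$ to be nonnegative, so $\sum_l\sum_i g_i(x_i^l)$ is dominated by the right-hand side; monotonicity and positive homogeneity of $[\cdot]_+$ then control $\norm{[\sum_i g_i(\barx_i^k)]_+}$. Stacking the two blocks and applying the operator-norm estimate $\norm{(\1_N\otimes I_{m+p})^TA\Delta}\le\sqrt{N\lam_1(P_A)}\,\norm{\Delta}_A$ (from $A=A^{1/2}A^{1/2}$ and $\norm{A^{1/2}(\1_N\otimes I_{m+p})}=\norm{P_A^{1/2}\1_N}\le\sqrt{N\lam_1(P_A)}$) yields \eqref{eq:cons_vio_bounded_by_y_A_norm}. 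Item~4 is the saddle-point argument: since \eqref{eq:optimal_primal_dual_pair1} makes $\x^\star$ a minimizer of the Lagrangian $L(\cdot,\mu^\star,\lam^\star)$ over $X$ and $\barbx^k\in X$, I have $L(\x^\star,\mu^\star,\lam^\star)\le L(\barbx^k,\mu^\star,\lam^\star)$; primal feasibility and complementary slackness collapse the left side to $f(\x^\star)$, and replacing $\sum_i g_i(\barx_i^k)$ by its positive part (using $\mu^\star\ge\0$) followed by Cauchy--Schwarz with $y^\star=[(\mu^\star)^T,(\lam^\star)^T]^T$ gives \eqref{eq:oe_bounded_by_fe}.

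The main effort is item~5, which is nonetheless purely algebraic. Using $D\hatby^k=A\y^k-\tilH^\half\z^k=A\y^k-\v^k$ together with \eqref{eq:relation_iterates}, I would first rewrite the left side of \eqref{eq:relation_v_star_lyapunov} as $\la\y^{k+1},A\y^k-D\y^{k+1}-(\v^k-\v^\star)\ra$ and expand the $A$-part via the identity $\la A\y^{k+1},\y^k-\y^{k+1}\ra=\tfrac12\norm{\y^k}_A^2-\tfrac12\norm{\y^{k+1}}_A^2-\tfrac12\norm{\y^k-\y^{k+1}}_A^2$ and $D=A+\rho H$. Next, using $\v^{k+1}=\v^k+\rho\tilH\y^{k+1}$ from \eqref{eq:v_update}, I expand the right-hand telescoping term; the one subtle point is that $\v^k-\v^\star\in\mathrm{range}(\tilH)$ (since $\v^k=\tilH^\half\z^k$, $\v^\star=\tilH^\half\z^\star$, and \eqref{eq:v_update} keeps every iterate in $\mathrm{range}(\tilH^\half)=\mathrm{range}(\tilH)$), so $\tilH^\dag\tilH$ acts as the identity on it and $\tilH\tilH^\dag\tilH=\tilH$, giving $\tfrac1{2\rho}(\norm{\v^k-\v^\star}_{\tilH^\dag}^2-\norm{\v^{k+1}-\v^\star}_{\tilH^\dag}^2)=-\la\v^k-\v^\star,\y^{k+1}\ra-\tfrac\rho2\norm{\y^{k+1}}_{\tilH}^2$. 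After substitution the cross terms $\la\v^k-\v^\star,\y^{k+1}\ra$ cancel and \eqref{eq:relation_v_star_lyapunov} reduces to $\tfrac\rho2\norm{\y^{k+1}}_{\tilH}^2\le\tfrac12\norm{\y^k-\y^{k+1}}_A^2+\rho\norm{\y^{k+1}}_H^2$, which holds because $A\succeq0$ and $H\succeq\tilH$ (Assumption~\ref{asp:matrices}). I expect the only genuinely delicate part of the whole proof to be this last step: pairing the two quadratic expansions so that the cross terms cancel exactly, and justifying the $\tilH^\dag$ identities through the range condition; everything else is direct substitution of item~1 and the update formulae.
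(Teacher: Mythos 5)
Your proposal is correct and follows essentially the same route as the paper's proof: item~1 via the Moreau decomposition and the entrywise/diagonal structure of $D$ and $\calK$, items~2--4 by the same telescoping, block-splitting, and saddle-point arguments, and item~5 by the same quadratic expansions with $H\succeq\tilH$ and the range condition $\v^k-\v^\star\in\mathrm{range}(\tilH)$. The only difference is presentational: where the paper invokes Lemma~1 of \cite{WWL23} to handle the $\tilH^\dag$-telescoping in one stroke, you inline that computation explicitly, which is the same mechanism.
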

\begin{proof}
    See Appendix~\ref{app:prop:relations}
\end{proof}

\subsection{Convergence Analysis of DUCA}
Under Assumption~\ref{asp:problem_structure}-\ref{asp:diagonal}, DUCA (\eqref{eq:alg1_x_update}, \eqref{eq:alg1_y_update} and \eqref{eq:AMM_z_update}) is equivalent to AMM applied to \eqref{prob:D_prime}, thus we can leverage on some existing results of AMM \cite{WuL23}: the sequences $\{\norm{\y^k}_A\}$ and $\{\norm{\z^k}\}$ are bounded; both of the objective error and consensus error of problem~\eqref{prob:D_prime} at the averaged iterate $\barby^k$ converge at an $\bigO(\frac{1}{k})$ rate. We now quote these results in our notation.

Let $\s^k:=[(\y^k)^T,(\z^k)^T]^T$, $\s^\star:=[(\y^\star)^T,(\z^\star)^T]^T$ and $G:=\mathrm{diag}(A, I_{N(m+p)}/\rho)$.

\begin{lemma}[{\cite[Lemma~3]{WuL23}}]\footnote{To identify \eqref{eq:lem:bounded_z_y_y}, we need to take $\Lambda_M=\0_{N\times N}$ into the proof of \cite[Lemma~3]{WuL23}.} \label{lem:bounded_z_y}
    Suppose that Assumption~\ref{asp:problem_structure}-\ref{asp:diagonal} hold. The sequences $\{\x^k\}_{k\geq 1}, \{\y^k\}_{k\geq 0}$  and $\{\z^k\}_{k\geq 0}$ generated by DUCA satisfies that for all $k\geq 1$,
    \begin{gather}
        \norm{\z^k-\z^0} \leq \norm{\z^0-\z^\star} + \sqrt{\rho}\norm{\s^0-\s^\star}_G, \label{eq:lem:bounded_z_y_z}\\
        \norm{\y^k-\y^0}_A \leq \norm{\y^0-\y^\star}_A + \norm{\s^0-\s^\star}_G. \label{eq:lem:bounded_z_y_y}
    \end{gather}
\end{lemma}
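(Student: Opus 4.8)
The plan is to establish the two bounds \eqref{eq:lem:bounded_z_y_z} and \eqref{eq:lem:bounded_z_y_y} by exhibiting a Lyapunov-type descent inequality for the composite iterate $\s^k=[(\y^k)^T,(\z^k)^T]^T$ measured in the metric $G=\mathrm{diag}(A,I/\rho)$, and then telescoping it. Since under Assumption~\ref{asp:problem_structure}--\ref{asp:diagonal} DUCA coincides exactly with AMM applied to \eqref{prob:D_prime}, the whole argument can be run on the AMM updates \eqref{eq:AMM_y_update}--\eqref{eq:AMM_z_update}; this is precisely the content of \cite[Lemma~3]{WuL23}, so the real work is to re-derive its conclusion in the present notation and specialize it (the footnote indicates we set the extra surrogate-curvature matrix $\Lambda_M=\0$, which removes one term from the generic AMM estimate). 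First I would record the optimality condition \eqref{eq:optimal_condition_derivation} of the $\y$-update together with the $\z$-update \eqref{eq:AMM_z_update}, and invoke Proposition~\ref{prop:z_star} to supply the optimal pair $(\y^\star,\z^\star)$ satisfying the corresponding stationarity and the feasibility $\tilH^{\half}\y^\star=\0$.

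The core step is to derive a one-step contraction of the form
\begin{equation}
    \norm{\s^{k+1}-\s^\star}_G^2 \;\leq\; \norm{\s^{k}-\s^\star}_G^2 \;-\; R^k, \label{eq:plan_descent}
\end{equation}
where $R^k\geq 0$ collects nonnegative residual terms. To obtain this I would expand $\norm{\y^{k+1}-\y^\star}_A^2$ and $\tfrac1\rho\norm{\z^{k+1}-\z^\star}^2$ separately. For the $\z$-part, substituting \eqref{eq:AMM_z_update} gives a cross term $\la \z^k-\z^\star,\ \rho\tilH^{\half}\y^{k+1}\ra$ plus $\tfrac{\rho}{2}\norm{\tilH^{\half}\y^{k+1}}^2$; using $\tilH^{\half}\y^\star=\0$ this cross term becomes $\la\tilH^{\half}(\z^k-\z^\star),\y^{k+1}\ra$. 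For the $\y$-part, I would use the convexity of $-\tilq+\delta_\calK$ via the subgradient inclusion furnished by \eqref{eq:optimal_condition_derivation}, pairing $\y^{k+1}-\y^\star$ against the stationarity residual and against the stationarity condition at the optimum, so that the objective terms combine into a single nonpositive inner product. The matrix identities from Assumption~\ref{asp:matrices} (in particular $D=A+\rho H$ and $\mathrm{Null}(H)=\mathrm{Null}(\tilH)=S$ with $\y^\star\in S$) let the $H$-weighted and $A$-weighted quadratics recombine into the clean $G$-norm differences, and the three-point/cosine identity $\norm{a}_M^2-\norm{b}_M^2 = 2\la M(a-b),b\ra + \norm{a-b}_M^2$ is what turns the cross terms into telescoping differences while depositing the nonnegative remainder $R^k$.

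Once \eqref{eq:plan_descent} holds, monotonicity gives $\norm{\s^{k}-\s^\star}_G\leq\norm{\s^{0}-\s^\star}_G$ for every $k$, which is the engine behind both displayed bounds. For \eqref{eq:lem:bounded_z_y_z} I would write $\norm{\z^k-\z^0}\leq\norm{\z^k-\z^\star}+\norm{\z^\star-\z^0}$ and bound the first summand by $\sqrt\rho\,\norm{\s^k-\s^\star}_G$ (since the $\z$-block of $G$ is $I/\rho$), then apply the $G$-monotonicity to replace $\norm{\s^k-\s^\star}_G$ by $\norm{\s^0-\s^\star}_G$. For \eqref{eq:lem:bounded_z_y_y} I would similarly split $\norm{\y^k-\y^0}_A\leq\norm{\y^k-\y^\star}_A+\norm{\y^\star-\y^0}_A$ and bound $\norm{\y^k-\y^\star}_A\leq\norm{\s^k-\s^\star}_G\leq\norm{\s^0-\s^\star}_G$ using that the $\y$-block of $G$ is exactly $A$. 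The main obstacle I anticipate is the bookkeeping in the descent step: because $A$ and $H$ are only positive \emph{semi}definite, I must be careful that every recombination stays within the range spaces where the norms are genuine, and that the subgradient/normal-cone terms from $-\tilq$ and $\mathscr{N}_\calK$ are handled by monotonicity rather than by any smoothness that is unavailable here; getting the cross terms to cancel exactly against the $\tilH^{\half}$ coupling without leftover sign-indefinite pieces is the delicate part, and it is exactly where setting $\Lambda_M=\0$ in the parent lemma simplifies matters.
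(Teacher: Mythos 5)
You are correct, and your route coincides with the one the paper actually relies on: the paper gives no self-contained proof of this lemma, quoting \cite[Lemma~3]{WuL23} with $\Lambda_M=\0_{N\times N}$, and your Fej\'er-monotonicity argument for $\s^k$ in the $G$-metric is exactly that analysis specialized to the quadratic surrogate $\half\norm{\y-\y^k}_A^2$. In detail, pairing the monotonicity of $\partial(-\tilq+\delta_\calK)$ at $\y^{k+1}$ and $\y^\star$ (using $\0\in\partial(-\tilq+\delta_\calK)(\y^\star)+\tilH^\half\z^\star$ from Proposition~\ref{prop:z_star} together with $H\y^\star=\tilH^\half\y^\star=\0$) with the three-point identities and $\z^{k+1}-\z^k=\rho\tilH^\half\y^{k+1}$ yields $\norm{\s^{k+1}-\s^\star}_G^2\leq\norm{\s^k-\s^\star}_G^2-\norm{\y^{k+1}-\y^k}_A^2-2\rho\norm{\y^{k+1}}_H^2+\rho\norm{\y^{k+1}}_{\tilH}^2$, whose residual is nonnegative precisely because $\tilH\preceq H$ (Assumption~\ref{asp:matrices}), and your triangle-inequality extraction of \eqref{eq:lem:bounded_z_y_z} and \eqref{eq:lem:bounded_z_y_y} from the resulting $G$-monotonicity, via $\norm{\z^k-\z^\star}\leq\sqrt{\rho}\norm{\s^k-\s^\star}_G$ and $\norm{\y^k-\y^\star}_A\leq\norm{\s^k-\s^\star}_G$, is exact.
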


\begin{theorem}[{\cite[Theorem~1]{WuL23}}]
    Suppose Assumption~\ref{asp:problem_structure}-\ref{asp:diagonal} hold. The sequences $\{\x^k\}_{k\geq 1}, \{\y^k\}_{k\geq 0}$, and $\{\z^k\}_{k\geq 0}$ generated by DUCA satisfy that for all $k\geq 1$,
    \begin{align*}
        \norm{\tilH^{\half}\barby^k}&\leq \frac{1}{\rho k}\left(\norm{\z^0-\z^\star} + \sqrt{\rho} \norm{\s^0-\s^\star}_G \right), \\
        \begin{split}
            \tilq(\y^\star) - \tilq(\barby^k) &\leq \frac{1}{2k}\big(\frac{\norm{\z^0}^2}{\rho} + \norm{\y^0-\y^\star}_A^2 + \norm{\s^0-\s^\star}_G^2 \big),
        \end{split}\\
        \tilq(\y^\star) - \tilq(\barby^k) &\geq -\frac{\norm{\z^\star}}{\rho k}{\left(\norm{\z^0-\z^\star}+\sqrt{\rho}\norm{\s^0-\s^\star}_G\right)}.
    \end{align*}
\end{theorem}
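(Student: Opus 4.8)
The statement gathers three ergodic $O(1/k)$ estimates on the average $\barby^k$: a consensus-residual bound on $\norm{\tilH^\half\barby^k}$, and matching upper and lower bounds on the dual objective gap, which I write as $F(\barby^k)-F(\y^\star)=\tilq(\y^\star)-\tilq(\barby^k)$ with $F:=-\tilq$ (convex, since $\tilq$ is concave). Because DUCA coincides with AMM applied to~\eqref{prob:D_prime} under Assumptions~\ref{asp:problem_structure}--\ref{asp:diagonal}, the plan is to assemble all three from the per-iteration identities already recorded in Proposition~\ref{prop:relations} and the boundedness estimates of Lemma~\ref{lem:bounded_z_y}; indeed the whole statement is then a direct instance of \cite[Theorem~1]{WuL23}. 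The consensus bound and the lower bound are short, while the upper bound carries the real work.

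For the consensus bound I would telescope the $\z$-update~\eqref{eq:AMM_z_update}: summing $\z^{l+1}-\z^l=\rho\tilH^\half\y^{l+1}$ over $l=0,\dots,k-1$ gives $\z^k-\z^0=\rho k\,\tilH^\half\barby^k$, so $\norm{\tilH^\half\barby^k}=\tfrac{1}{\rho k}\norm{\z^k-\z^0}$, and \eqref{eq:lem:bounded_z_y_z} finishes it. For the lower bound I would use that $(\y^\star,\z^\star)$ is a saddle point of $\Phi(\y,\z):=F(\y)+\la\z,\tilH^\half\y\ra$ over $\calK\times\R^{N(m+p)}$, which follows from the zero duality gap in Proposition~\ref{prop:z_star}. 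Thus $\Phi(\y^\star,\z^\star)\le\Phi(\barby^k,\z^\star)$, and since $\tilH^\half\y^\star=\0$ this reads $F(\y^\star)\le F(\barby^k)+\la\z^\star,\tilH^\half\barby^k\ra$; Cauchy--Schwarz then gives $F(\barby^k)-F(\y^\star)\ge-\norm{\z^\star}\,\norm{\tilH^\half\barby^k}$, into which I substitute the consensus bound just obtained to recover the stated lower bound.

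The upper bound is the crux. From \eqref{eq:relation_iterates} together with $-\tilg(\x^{k+1})\in\partial F(\y^{k+1})$ (Danskin's theorem, available since Assumption~\ref{asp:compact} holds for DUCA), the vector $D(\hatby^k-\y^{k+1})+\bsig^{k+1}$ is a subgradient of $F$ at $\y^{k+1}$, with $-\bsig^{k+1}\in\mathscr{N}_\calK(\y^{k+1})$. Convexity of $F$ and the normal-cone inequality at the feasible point $\y^\star\in\calK$ give $F(\y^{k+1})-F(\y^\star)\le\la D(\hatby^k-\y^{k+1}),\y^{k+1}-\y^\star\ra$. I would split the right side into $\la D(\hatby^k-\y^{k+1}),\y^{k+1}\ra$ and $-\la D(\hatby^k-\y^{k+1}),\y^\star\ra$; after adding $\la\v^\star,\y^{k+1}\ra$ to both sides, the first piece becomes the left-hand side of \eqref{eq:relation_v_star_lyapunov} and is bounded by its telescoping right-hand side, while \eqref{eq:relation_iterates} rewrites the residual as $-\la D(\hatby^k-\y^{k+1}),\y^\star\ra=\la\tilg(\x^{k+1})+\bsig^{k+1},\y^\star\ra$. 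Summing over $l=0,\dots,k-1$, the quadratics telescope; Jensen's inequality yields $\tfrac{1}{k}\sum_{l=1}^k F(\y^l)\ge F(\barby^k)$; and, since $\y^\star\in S$, \eqref{eq:relation_y_A_constraints} collapses the summed residual to $\la A(\y^k-\y^0),\y^\star\ra\le\norm{\y^k-\y^0}_A\,\norm{\y^\star}_A$, which \eqref{eq:lem:bounded_z_y_y} bounds uniformly. Dividing by $k$ delivers the $O(1/k)$ upper bound.

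The main obstacle is the bookkeeping in this last step rather than any single hard inequality. Three points need care: matching terms against \eqref{eq:relation_v_star_lyapunov}, which is stated with $\norm{\cdot}_A$ and $\norm{\cdot-\v^\star}_{\tilH^\dag}$ rather than the $\norm{\cdot-\y^\star}_A$ and $\z$-based norms appearing in the theorem; converting the $\tilH^\dag$-weighted increments of $\v^k$ into $\norm{\z^0}^2/\rho$, using $\v^k\in\mathrm{range}(\tilH^\half)$ so that $\tilH^\half\tilH^\dag\tilH^\half$ acts as the orthogonal projector onto $\mathrm{range}(\tilH)$ and, equivalently, choosing the dual test point $\z=\0$ in the underlying variational inequality; and controlling the single residual cross term by \eqref{eq:lem:bounded_z_y_y}, which is precisely what injects the $\norm{\s^0-\s^\star}_G^2$ contribution into the final constant. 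Once these conversions are carried out, the constants coincide with those of \cite[Theorem~1]{WuL23}.
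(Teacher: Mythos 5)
The paper itself never proves this theorem internally: having established (Lemma~\ref{lem:compute_x} and the remark after it) that under Assumptions~\ref{asp:problem_structure}--\ref{asp:diagonal} DUCA generates exactly the iterates of AMM applied to \eqref{prob:D_prime}, it simply quotes \cite[Theorem~1]{WuL23} in the present notation. Your opening observation---that the statement is a direct instance of that cited theorem once the equivalence is in place---therefore already coincides with the paper's entire argument, and your reconstructions of the first and third inequalities are correct with exactly the stated constants: telescoping \eqref{eq:AMM_z_update} gives $\tilH^\half\barby^k=(\z^k-\z^0)/(\rho k)$ so that \eqref{eq:lem:bounded_z_y_z} closes the consensus bound, and the minimization property of $\y^\star$ from Proposition~\ref{prop:z_star} applied at $\barby^k\in\calK$ (each $\y^l\in\calK$ by the projection in \eqref{eq:alg1_y_update}, and $\calK$ is convex---worth stating explicitly), together with $\tilH^\half\y^\star=\0$ and Cauchy--Schwarz, yields the lower bound.

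The genuine gap is in the middle inequality: your sketch delivers \emph{an} $O(1/k)$ upper bound, but not the stated constant, and your closing claim that ``the constants coincide'' is not borne out by the conversions you list. Carrying out your plan with $F:=-\tilq$: the per-iteration inequality $F(\y^{l+1})-F(\y^\star)\le\la\y^{l+1},D(\hatby^l-\y^{l+1})+\v\ra-\la\v,\y^{l+1}\ra+\la\tilg(\x^{l+1})+\bsig^{l+1},\y^\star\ra$ telescopes via \eqref{eq:relation_v_star_lyapunov}, which (as you must first verify) holds with $\v^\star$ replaced by any $\v\in\mathrm{range}(\tilH^\half)$, since its proof only uses $\v^{k+1}-\v^k=\rho\tilH\y^{k+1}$ and never the optimality of $\z^\star$; the choice $\v=\0$ is what produces $\norm{\z^0}^2/\rho$ (keeping $\v^\star$ instead leaves the non-telescoping term $-\tfrac{1}{\rho}\la\z^\star,\z^k-\z^0\ra$ and a $\norm{\z^\star}$ cross term). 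The summed residual then collapses via \eqref{eq:relation_y_A_constraints} to $\la A(\y^k-\y^0),\y^\star\ra$, and Cauchy--Schwarz with \eqref{eq:lem:bounded_z_y_y} bounds it by $\norm{\y^\star}_A(\norm{\y^0-\y^\star}_A+\norm{\s^0-\s^\star}_G)$. The resulting constant is $\half\norm{\y^0}_A^2+\tfrac{1}{2\rho}\norm{\z^0}^2+\norm{\y^\star}_A(\norm{\y^0-\y^\star}_A+\norm{\s^0-\s^\star}_G)$, which does not equal $\half(\tfrac{\norm{\z^0}^2}{\rho}+\norm{\y^0-\y^\star}_A^2+\norm{\s^0-\s^\star}_G^2)$: for instance, with $\y^0=\y^\star$ and $\z^0=\z^\star$ the residual vanishes but your bound retains an extra $\half\norm{\y^\star}_A^2$ (generically nonzero since $A\y^\star\neq\0$), while the theorem's constant reduces to $\tfrac{\norm{\z^\star}^2}{2\rho}$. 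Nor can the residual be discarded by sign: $\la\bsig^{l+1},\y^\star\ra\geq 0$ and the weak-duality estimate $\la\tilg(\x^{l+1}),\y^\star\ra\geq f(\x^\star)-f(\x^{l+1})$ both point the wrong way for an upper bound. Recovering the exact constants requires the internal Lyapunov pairing of \cite[Theorem~1]{WuL23} itself; as sketched, your direct derivation proves only a weaker version of the middle inequality, whereas the citation route---the paper's actual proof---is already complete.
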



However, to provide convergence results in terms of the primal problem~\eqref{prob:CC}, we need new analysis. The following theorem provides an $O(1/k)$ convergence rate of the primal feasibility error at the averaged iterate $\barbx^k$, which is essentially guaranteed by the boundedness of dual iterates.

\begin{theorem}[feasibility error] \label{thm:alg1_feasibility_error}
    Suppose that Assumption~\ref{asp:problem_structure}-\ref{asp:diagonal} hold. The sequences $\{\x^k\}_{k\geq 1}, \{\y^k\}_{k\geq 0}$ and $\{\z^k\}_{k\geq 0}$ generated by DUCA satisfy that for all $k\geq 1$, 
    \begin{multline}
        \left\Vert\begin{bmatrix}
            \big[\sum_{i=1}^N g_i(\barx_i^k)\big]_+\\
            \sum_{i=1}^N h_i(\barx_i^k)
        \end{bmatrix}\right\Vert 
        \leq \frac{\sqrt{N\lam_{1}(P_A)}}{k} \big(\norm{\y^0-\y^\star}_A \\+ \norm{\s^0-\s^\star}_G\big). \label{eq:thm:alg1_fe_result}
    \end{multline}
\end{theorem}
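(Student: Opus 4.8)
The plan is to obtain Theorem~\ref{thm:alg1_feasibility_error} by chaining two results already established earlier in the excerpt, so that essentially no new estimation is required. The key observation is that Proposition~\ref{prop:relations}(3), i.e., \eqref{eq:cons_vio_bounded_by_y_A_norm}, already isolates the ergodic feasibility error and bounds it by
\[
\tfrac{\sqrt{N\lam_1(P_A)}}{k}\,\norm{\y^k-\y^0}_A .
\]
This matches the target \eqref{eq:thm:alg1_fe_result} in its leading factor $\tfrac{\sqrt{N\lam_1(P_A)}}{k}$; the only discrepancy is that the right-hand side still depends on the running quantity $\norm{\y^k-\y^0}_A$, which a priori could grow with $k$. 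Hence the entire remaining task is to replace $\norm{\y^k-\y^0}_A$ by a bound that is \emph{uniform in $k$}, after which the $1/k$ factor delivers the claimed $O(1/k)$ rate.

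First I would invoke the hypothesis that Assumptions~\ref{asp:problem_structure}--\ref{asp:diagonal} hold, under which DUCA coincides with AMM applied to \eqref{prob:D_prime}; this is precisely the regime in which Lemma~\ref{lem:bounded_z_y} is valid. I would then apply the stability estimate \eqref{eq:lem:bounded_z_y_y}, which asserts
\[
\norm{\y^k-\y^0}_A \le \norm{\y^0-\y^\star}_A + \norm{\s^0-\s^\star}_G \quad \forall k\ge 1,
\]
with a right-hand side independent of $k$. Substituting this into \eqref{eq:cons_vio_bounded_by_y_A_norm} immediately yields \eqref{eq:thm:alg1_fe_result}, completing the argument.

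The argument carries no genuine obstacle of its own, because the two inequalities it combines have already done the substantive work: Proposition~\ref{prop:relations}(3) converts the coupled primal residual at $\barbx^k$ into the dual excursion $\norm{\y^k-\y^0}_A$ (via the accumulation identity \eqref{eq:relation_y_A_constraints}, convexity of $g_i$ and affineness of $h_i$ applied to the ergodic average, and a norm bound on $(\1_N\otimes I_{m+p})^T A$ that produces the factor $\sqrt{N\lam_1(P_A)}$), while the Lyapunov-type stability analysis behind Lemma~\ref{lem:bounded_z_y} guarantees that this excursion stays bounded. If I had to locate where the real weight of the theorem lies, it is exactly this boundedness of the dual iterates: it is what turns an a priori $O(\norm{\y^k-\y^0}_A/k)$ estimate into a bona fide $O(1/k)$ rate. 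The only point demanding a little care is confirming that the constant-stepsize AMM stability bound \eqref{eq:lem:bounded_z_y_y} genuinely applies here, i.e., that the parameter matrices satisfy Assumption~\ref{asp:matrices} and that $\z^\star=(\tilH^\half)^\dag\tilg(\x^\star)$ from Proposition~\ref{prop:z_star} is well defined so that $\s^\star$ entering $\norm{\s^0-\s^\star}_G$ makes sense; both are ensured by the standing assumptions.
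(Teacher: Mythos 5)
Your proposal is correct and matches the paper's proof exactly: the paper likewise establishes \eqref{eq:thm:alg1_fe_result} by combining the ergodic feasibility bound \eqref{eq:cons_vio_bounded_by_y_A_norm} from Proposition~\ref{prop:relations}(3) with the uniform dual-iterate bound \eqref{eq:lem:bounded_z_y_y} from Lemma~\ref{lem:bounded_z_y}. Your added remarks on where the substantive work lies (boundedness of the dual iterates via the AMM equivalence) are consistent with the paper's framing.
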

\begin{proof}
    Combining \eqref{eq:cons_vio_bounded_by_y_A_norm} with \eqref{eq:lem:bounded_z_y_y}, the result follows.
\end{proof}


To show the convergence rate of objective error, we need the following lemma.

\begin{lemma} \label{lem:alg1_objective_error1}
    Suppose that Assumption~\ref{asp:problem_structure}-\ref{asp:diagonal} hold and the sequences $\{\x^k\}_{k\geq 1}, \{\y^k\}_{k\geq 0}, \{\z^k\}_{k\geq 0}$ are generated by DUCA. Then for all $k\geq 0$,
    \begin{multline}
        f(\x^{k+1})-f(\x^\star) \leq \frac{1}{2}(\norm{\y^k}_A^2 - \norm{\y^{k+1}}_A^2)\\ 
        + \frac{1}{2\rho}(\norm{\v^k-\v^\star}_{\tilH^\dag}^2
        - \norm{\v^{k+1}-\v^\star}_{\tilH^\dag}^2).
    \end{multline} 
\end{lemma}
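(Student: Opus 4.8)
The plan is to observe that the right-hand side of the claimed bound is \emph{identical} to the right-hand side of the Lyapunov inequality \eqref{eq:relation_v_star_lyapunov} in Proposition~\ref{prop:relations}(5). Hence the entire task reduces to showing that the left-hand side of \eqref{eq:relation_v_star_lyapunov} dominates the objective error, i.e. $f(\x^{k+1}) - f(\x^\star) \le \la\y^{k+1}, D(\hatby^k - \y^{k+1}) + \v^\star\ra$. First I would simplify that inner product. Since $D\hatby^k = A\y^k - \tilH^\half\z^k$ by definition of $\hatby^k$, subtracting the recursion \eqref{eq:relation_iterates} gives $D(\hatby^k - \y^{k+1}) = -\tilg(\x^{k+1}) - \bsig^{k+1}$, so the left-hand side collapses to $\la\y^{k+1},\, \v^\star - \tilg(\x^{k+1}) - \bsig^{k+1}\ra$.

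Next I would assemble the objective error out of dual quantities. Because $\x^{k+1}\in\calX(\y^{k+1})$ attains the minimum defining the dual function, $f(\x^{k+1}) = \tilq(\y^{k+1}) - \la\y^{k+1}, \tilg(\x^{k+1})\ra$; and since $\x^\star$ minimizes the Lagrangian in \eqref{eq:optimal_primal_dual_pair1} while complementary slackness and equality feasibility force $\la\y^\star, \tilg(\x^\star)\ra = 0$, one gets $f(\x^\star) = \tilq(\y^\star)$. Subtracting yields $f(\x^{k+1}) - f(\x^\star) = \big(\tilq(\y^{k+1}) - \tilq(\y^\star)\big) - \la\y^{k+1}, \tilg(\x^{k+1})\ra$. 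The $\bsig^{k+1}$ contribution then disappears for free: because $-\bsig^{k+1}\in\mathscr{N}_\calK(\y^{k+1})$ (consistent with \eqref{eq:sig_update}, as established in the proof of Proposition~\ref{prop:relations}) and $\calK$ is the product of $\R_+^{Nm}$ with a free subspace, the complementary-slackness structure of the normal cone gives $\la\y^{k+1}, \bsig^{k+1}\ra = 0$.

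The crux, which I expect to be the main obstacle, is the saddle-point inequality $\tilq(\y^{k+1}) - \tilq(\y^\star) \le \la\v^\star, \y^{k+1}\ra$. I would derive it from Proposition~\ref{prop:z_star}: $(\y^\star,\z^\star)$ being a primal-dual optimal pair with no duality gap means $\y^\star$ minimizes the partial Lagrangian $-\tilq(\y) + \la\z^\star, \tilH^\half\y\ra$ over $\y\in\calK$, while $\tilH^\half\y^\star = \0$. Evaluating this minimality at $\y = \y^{k+1}\in\calK$ (which lies in $\calK$ since it is a projection onto $\calK$) and using $\v^\star = \tilH^\half\z^\star$ yields the inequality exactly. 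Combining the three ingredients—the objective-error identity, this saddle-point inequality, and the vanishing of the $\bsig^{k+1}$ term—gives $f(\x^{k+1}) - f(\x^\star) \le \la\y^{k+1}, \v^\star - \tilg(\x^{k+1}) - \bsig^{k+1}\ra$, which is precisely the simplified left-hand side of \eqref{eq:relation_v_star_lyapunov}; invoking that inequality then delivers the claimed telescoping bound. The only delicate points are justifying the two structural identities $\la\y^{k+1}, \bsig^{k+1}\ra = 0$ and $\la\y^\star, \tilg(\x^\star)\ra = 0$, together with the minimality characterization of $\y^\star$, all of which follow from the normal-cone/complementary-slackness conditions and Proposition~\ref{prop:z_star}.
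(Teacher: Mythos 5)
Your proposal is correct, and its overall skeleton matches the paper's: both reduce the lemma to the intermediate inequality $f(\x^{k+1})-f(\x^\star)\leq\la\y^{k+1}, D(\hatby^k-\y^{k+1})+\v^\star\ra$ and then invoke Proposition~\ref{prop:relations}(5), and both obtain the $\la\v^\star,\y^{k+1}\ra$ term by exactly the saddle-point argument you describe ($\y^\star$ minimizes $-\tilq(\y)+\la\z^\star,\tilH^\half\y\ra$ over $\calK$ by Proposition~\ref{prop:z_star}, combined with $\tilH^\half\y^\star=\0$ and $\tilq(\y^\star)=f(\x^\star)$ from strong duality). Where you genuinely diverge is the middle identity $f(\x^{k+1})=\tilq(\y^{k+1})+\la\y^{k+1},D(\hatby^k-\y^{k+1})\ra$: the paper routes it through the augmented Lagrangian \eqref{eq:def_AL}, using the envelope relation \eqref{eq:lagrangians_relation_k} and the quadratic expansion $\half(\norm{\hatby^k}_D^2-\norm{\y^{k+1}}_D^2-\norm{\hatby^k-\y^{k+1}}_D^2)=\la\y^{k+1},D(\hatby^k-\y^{k+1})\ra$, whereas you substitute the recursion \eqref{eq:relation_iterates} to get $D(\hatby^k-\y^{k+1})=-\tilg(\x^{k+1})-\bsig^{k+1}$, use $f(\x^{k+1})=\tilq(\y^{k+1})-\la\y^{k+1},\tilg(\x^{k+1})\ra$ (valid because Lemma~\ref{lem:compute_x} guarantees $\x^{k+1}\in\calX(\y^{k+1})$ for DUCA), and kill the remaining term via $\la\y^{k+1},\bsig^{k+1}\ra=0$. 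That orthogonality is sound: since $\calK$ is a cone, any element of $\mathscr{N}_\calK(\y^{k+1})$ is orthogonal to $\y^{k+1}$ (test the normal-cone inequality at $z=\0$ and $z=2\y^{k+1}$), and the membership $-\bsig^{k+1}\in\mathscr{N}_\calK(\y^{k+1})$ is indeed established entrywise in the proof of Proposition~\ref{prop:relations}(1) via the Moreau decomposition of the common point in \eqref{eq:alg1_y_update} and \eqref{eq:sig_update} (this alignment of the two projections quietly uses the diagonal structure of $D$ from Assumption~\ref{asp:diagonal}, which you should cite). Your route is somewhat more elementary for this particular lemma, trading the envelope calculus for relations already proved in Proposition~\ref{prop:relations}; the paper's augmented-Lagrangian machinery is not wasted, however, since \eqref{eq:lagrangians_relation_k} and \eqref{eq:lagrangian_subgradients_relation} are reused in the Pro-DUCA analysis (Lemma~\ref{lem:alg2_objective_error}), where your shortcut would fail verbatim because the added proximal term means $\x^{k+1}\notin\calX(\y^{k+1})$ and the Fenchel-type identity $f(\x^{k+1})=\tilq(\y^{k+1})-\la\y^{k+1},\tilg(\x^{k+1})\ra$ no longer holds.
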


\begin{proof} 
    See Appendix~\ref{app:lem:alg1_objective_error1}.
\end{proof}

\begin{theorem}[objective error]
    Suppose that Assumption~\ref{asp:problem_structure}-\ref{asp:diagonal} hold. The sequences $\{\x^k\}_{k\geq 1}, \{\y^k\}_{k\geq 0}, \{\z^k\}_{k\geq 0}$ are generated by DUCA. Then for any $k\geq 1$,
    \begin{gather}
        -\frac{R_1}{k} \leq f(\barbx^k)-f(\x^\star) \leq \frac{R_2}{k}, \label{eq:thm:alg1_oe_result}
    \end{gather}
    where $R_1:=\norm{y^\star}\sqrt{N\lam_{1}(P_A)} \big(\norm{\y^0-\y^\star}_A + \norm{\s^0-\s^\star}_G\big)$ and $R_2:=\frac{1}{2\rho}\norm{\v^0-\v^\star}_{\tilH^\dag}^2+\half\norm{\y^0}_A^2$.
\end{theorem}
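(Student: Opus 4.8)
The plan is to establish the two-sided bound in \eqref{eq:thm:alg1_oe_result} by treating the upper and lower bounds separately, since they draw on different ingredients already assembled in the excerpt. For the upper bound $f(\barbx^k)-f(\x^\star)\leq R_2/k$, I would start from Lemma~\ref{lem:alg1_objective_error1}, which gives the per-iteration inequality $f(\x^{k+1})-f(\x^\star)\leq \frac{1}{2}(\norm{\y^k}_A^2-\norm{\y^{k+1}}_A^2) + \frac{1}{2\rho}(\norm{\v^k-\v^\star}_{\tilH^\dag}^2 - \norm{\v^{k+1}-\v^\star}_{\tilH^\dag}^2)$. The natural move is to sum this telescoping inequality over the index range from $0$ to $k-1$, so that the right-hand side collapses to $\frac{1}{2}(\norm{\y^0}_A^2-\norm{\y^k}_A^2)+\frac{1}{2\rho}(\norm{\v^0-\v^\star}_{\tilH^\dag}^2-\norm{\v^k-\v^\star}_{\tilH^\dag}^2)$. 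Dropping the two negative terms $-\frac{1}{2}\norm{\y^k}_A^2$ and $-\frac{1}{2\rho}\norm{\v^k-\v^\star}_{\tilH^\dag}^2$ (each nonpositive since $A\succeq 0$ and $\tilH^\dag\succeq 0$) leaves $\sum_{l=1}^{k}\big(f(\x^l)-f(\x^\star)\big)\leq \frac{1}{2}\norm{\y^0}_A^2+\frac{1}{2\rho}\norm{\v^0-\v^\star}_{\tilH^\dag}^2 = k R_2$. Finally, convexity of $f$ gives $f(\barbx^k)\leq \frac{1}{k}\sum_{l=1}^k f(\x^l)$ via Jensen's inequality (recalling $\barbx^k=\frac{1}{k}\sum_{l=1}^k\x^l$), which yields $f(\barbx^k)-f(\x^\star)\leq R_2/k$.

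For the lower bound $f(\barbx^k)-f(\x^\star)\geq -R_1/k$, the ingredients are already packaged in Proposition~\ref{prop:relations}. Part~4, inequality \eqref{eq:oe_bounded_by_fe}, bounds the objective error from below by the negative of $\norm{y^\star}$ times the ergodic constraint-violation norm. Then Theorem~\ref{thm:alg1_feasibility_error} (inequality \eqref{eq:thm:alg1_fe_result}) bounds precisely that constraint-violation norm by $\frac{\sqrt{N\lam_1(P_A)}}{k}\big(\norm{\y^0-\y^\star}_A+\norm{\s^0-\s^\star}_G\big)$. Chaining these two inequalities together immediately produces $f(\barbx^k)-f(\x^\star)\geq -\norm{y^\star}\cdot\frac{\sqrt{N\lam_1(P_A)}}{k}\big(\norm{\y^0-\y^\star}_A+\norm{\s^0-\s^\star}_G\big) = -R_1/k$, matching the stated definition of $R_1$ exactly.

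The most delicate step, and the one I would scrutinize, is the telescoping argument for the upper bound, specifically justifying that the two discarded terms are genuinely nonpositive and that the summation bookkeeping aligns with the $k\geq 1$ claim. The $\y^k$-term is harmless because $\norm{\cdot}_A^2\geq 0$, but the $\v^k$-term requires $\tilH^\dag$ to be positive semidefinite, which holds since $\tilH$ is symmetric positive semidefinite (its pseudo-inverse inherits this). I would also double-check that $\norm{\v^0-\v^\star}_{\tilH^\dag}$ is well-defined, i.e.\ that $\v^0-\v^\star\in\mathrm{range}(\tilH^\half)=\mathrm{range}(\tilH)$ so the seminorm behaves correctly; this follows from the choice $\v^0\in\mathrm{range}(\tilH^\half)$ noted after \eqref{eq:v_update} together with $\v^\star=\tilH^\half\z^\star\in\mathrm{range}(\tilH^\half)$. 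Beyond these verifications, the proof is essentially a direct assembly: the upper bound is a one-line telescope-plus-Jensen, and the lower bound is a two-inequality chain. No new estimates are needed, so the obstacle is purely one of carefully invoking the already-established lemmas rather than any substantive new analysis.
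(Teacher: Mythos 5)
Your proposal is correct and follows essentially the same route as the paper: the upper bound comes from telescoping Lemma~\ref{lem:alg1_objective_error1} and applying convexity of $f$ to the ergodic average, and the lower bound chains \eqref{eq:oe_bounded_by_fe} with the feasibility bound of Theorem~\ref{thm:alg1_feasibility_error}. Your extra care in verifying the dropped telescoping terms are nonpositive and that $\v^0-\v^\star\in\mathrm{range}(\tilH^{\half})$ is sound, and your citation of \eqref{eq:oe_bounded_by_fe} for the lower bound is in fact the correct one (the paper's proof text cites \eqref{eq:cons_vio_bounded_by_y_A_norm}, apparently a typo, as the analogous Pro-DUCA proof confirms).
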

\begin{proof} 
    The first half of \eqref{eq:thm:alg1_oe_result} is due to \eqref{eq:cons_vio_bounded_by_y_A_norm} and Theorem~\ref{thm:alg1_feasibility_error}. The second half is proved by considering: 
    $
         f(\barbx^k)-f(\x^\star) \leq \frac{1}{k}\sum_{l=1}^k\big(f(\x^l)-f(\x^\star)) 
        \leq{} \frac{1}{k}\big(\frac{1}{2\rho}\norm{\v^0-\v^\star}_{\tilH^\dag}^2+\half\norm{\y^0}_A^2\big),
    $
    where the first inequality is by convexity of $f$ and the latter is due to Lemma~\ref{lem:alg1_objective_error1}.
\end{proof}

\subsection{Convergence Analysis of Pro-DUCA}

Pro-DUCA is no longer equivalent to AMM applied to problem~\eqref{prob:D_prime}, since a proximal term is added to the update of $\x^k$. For the this reason, the convergence analysis needs to be built from scratch; and for the same reason, it can solve more general problems with unbounded constraint set $X$, i.e., Assumption~\ref{asp:compact} will be discarded in the following analysis.

\begin{lemma}\label{lem:alg2_objective_error}
    Suppose that Assumption~\ref{asp:problem_structure}-\ref{asp:matrices} and Assumption~\ref{asp:diagonal} hold and the sequences $\{\x^k, \y^k, \z^k\}_{k\geq 0}$ are generated by Pro-DUCA. Then for all $k\geq 0$,
    \begin{multline}
        f(\x^{k+1})-f(\x^\star) \leq \frac{1}{2\rho}( \norm{\v^k-\v^\star}_{\tilH^\dag}^2 - \norm{\v^{k+1}-\v^\star}_{\tilH^\dag}^2) \\
        + \frac{1}{2}(\norm{\y^k}_A^2 - \norm{\y^{k+1}}_A^2)
         + \frac{\alpha}{2}(\norm{\x^k-\x^\star}^2 - \norm{\x^{k+1}-\x^\star}^2).
    \end{multline}
\end{lemma}
\begin{proof}
    See Appendix~\ref{app:lem:alg2_objective_error}.
\end{proof}

Using the lemma above, the boundedness of $\{\norm{\y^k}_A\}$, and the convergence rates of feasibility and objective errors, can all be proved. The proofs are inspired by \cite{WWL23}.

\begin{lemma}\label{lem:alg2_y_bounded}
    Suppose that Assumption~\ref{asp:problem_structure}-\ref{asp:matrices} and Assumption~\ref{asp:diagonal} hold and the sequences $\{\x^k, \y^k, \z^k\}_{k\geq 0}$ are generated by Pro-DUCA. Then for all $k\geq 1$,
    \begin{gather}
        \norm{\y^k}_A\leq C_1 + C_2,\label{eq:lem:alg2_y_bounded_result}
    \end{gather}
    where $C_1=\sqrt{N\lam_{1}(P_A)}\norm{y^\star}$, $C_2=((\norm{\y^0}_A+C_1)^2 + \alpha\norm{\x^0-\x^\star}^2 + \frac{1}{\rho}\norm{\v^0-\v^\star}_{\tilH^\dag}^2)^\half$.
\end{lemma}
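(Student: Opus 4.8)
The plan is to manufacture a self-bounding quadratic inequality for $\norm{\y^k}_A$ from the per-iteration descent estimate of Lemma~\ref{lem:alg2_objective_error}, in the spirit of \cite{WWL23}. First I would sum that estimate over $l=0,1,\ldots,k-1$. The three quadratic groups on the right telescope, and the surviving end terms $-\frac{1}{2\rho}\norm{\v^k-\v^\star}_{\tilH^\dag}^2$ and $-\frac{\alpha}{2}\norm{\x^k-\x^\star}^2$ are nonpositive, so they may be discarded, while the term $-\frac{1}{2}\norm{\y^k}_A^2$ is the one I keep. Combining with Jensen's inequality $k\,(f(\barbx^k)-f(\x^\star))\leq\sum_{l=1}^k\bigl(f(\x^l)-f(\x^\star)\bigr)$ yields
\begin{equation*}
\tfrac{1}{2}\norm{\y^k}_A^2 \leq \tfrac{1}{2}\norm{\y^0}_A^2 + \tfrac{1}{2\rho}\norm{\v^0-\v^\star}_{\tilH^\dag}^2 + \tfrac{\alpha}{2}\norm{\x^0-\x^\star}^2 - k\,\bigl(f(\barbx^k)-f(\x^\star)\bigr).
\end{equation*}

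The second step is to control the last term from below using the two ergodic estimates already available in Proposition~\ref{prop:relations}. Item~4, \eqref{eq:oe_bounded_by_fe}, lower-bounds $f(\barbx^k)-f(\x^\star)$ by $-\norm{y^\star}$ times the ergodic constraint-violation norm, and item~3, \eqref{eq:cons_vio_bounded_by_y_A_norm}, bounds that violation norm by $\frac{\sqrt{N\lam_1(P_A)}}{k}\norm{\y^k-\y^0}_A$. Chaining the two gives $-k\,(f(\barbx^k)-f(\x^\star))\leq C_1\,\norm{\y^k-\y^0}_A$ with $C_1=\sqrt{N\lam_1(P_A)}\,\norm{y^\star}$, the factor $k$ canceling exactly against the $1/k$ in the feasibility bound.

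The final step is to resolve the quadratic. Setting $a_k:=\norm{\y^k}_A$ and applying the triangle inequality $\norm{\y^k-\y^0}_A\leq a_k+\norm{\y^0}_A$, multiplying through by $2$ gives $a_k^2-2C_1 a_k\leq \norm{\y^0}_A^2 + 2C_1\norm{\y^0}_A + \frac{1}{\rho}\norm{\v^0-\v^\star}_{\tilH^\dag}^2 + \alpha\norm{\x^0-\x^\star}^2$. Completing the square on the left and using $C_1^2+\norm{\y^0}_A^2+2C_1\norm{\y^0}_A=(\norm{\y^0}_A+C_1)^2$ regroups the right-hand side into precisely $C_2^2$, so that $(a_k-C_1)^2\leq C_2^2$, whence $a_k\leq C_1+C_2$, which is \eqref{eq:lem:alg2_y_bounded_result}.

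I expect the only delicate point to be the bookkeeping rather than any new analytic idea: tracking the sign of the retained $-\frac{1}{2}\norm{\y^k}_A^2$ term through the telescoping, verifying that the factor $k$ in the objective-error sum cancels the $1/k$ carried by the constraint-violation bound, and checking that the scattered initial-condition constants reassemble exactly into the stated $C_1$ and $C_2$ after completing the square. No inequality beyond Lemma~\ref{lem:alg2_objective_error} and Proposition~\ref{prop:relations} is needed, so the entire content of the proof resides in this accounting.
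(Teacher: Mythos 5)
Your proof is correct, and its skeleton is the one the paper uses: telescope Lemma~\ref{lem:alg2_objective_error} to get $\half\norm{\y^k}_A^2\leq R^0+\sum_{l=1}^k\big(f(\x^\star)-f(\x^l)\big)$, bound the objective-error sum by $C_1\norm{\y^k-\y^0}_A$, apply the triangle inequality, and complete the square to obtain $(\norm{\y^k}_A-C_1)^2\leq C_2^2$. The one genuine difference is how you produce the cross term $C_1\norm{\y^k-\y^0}_A$. The paper derives it directly: it writes the strong-duality inequality $f(\x^\star)\leq f(\x^l)+\la y^\star,(\1_N\otimes I_{m+p})^T\tilg(\x^l)\ra$ at each iterate, inserts the $\bsig^l$ terms using $\la y^\star,(\1_N\otimes I_{m+p})^T\sum_l\bsig^l\ra\geq 0$, and then invokes \eqref{eq:relation_y_A_constraints} followed by Cauchy--Schwarz. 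You instead chain three already-established ergodic facts — Jensen's inequality $k\big(f(\barbx^k)-f(\x^\star)\big)\leq\sum_{l=1}^k\big(f(\x^l)-f(\x^\star)\big)$, then \eqref{eq:oe_bounded_by_fe}, then \eqref{eq:cons_vio_bounded_by_y_A_norm} — with the factor $k$ canceling the $1/k$ exactly as you note. Since the proofs of items~3 and~4 of Proposition~\ref{prop:relations} themselves consume the same ingredients (sign of $\bsig^l$, relation \eqref{eq:relation_y_A_constraints}, complementary slackness, Cauchy--Schwarz), the two routes are mathematically equivalent and yield identical constants; what your version buys is modularity — no strong-duality computation is redone inside the lemma — and it is legitimate here because Proposition~\ref{prop:relations} is stated for Pro-DUCA without Assumption~\ref{asp:compact}. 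The only cosmetic loss is that you discard the nonnegative $\x$- and $\v$-terms of $R^k$ early, whereas the paper retains them until its final display; this changes nothing for the stated bound.
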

\begin{proof}
    See Appendix~\ref{app:lem:alg2_y_bounded}.
\end{proof}

\begin{theorem}[feasibility error]\label{thm:alg2_feasibility_error}
    Suppose that Assumption~\ref{asp:problem_structure}-\ref{asp:matrices} and Assumption~\ref{asp:diagonal} hold and the sequences $\{\x^k, \y^k, \z^k\}_{k\geq 0}$ are generated by Pro-DUCA. Then for all $k\geq 1$, 
    \begin{align}
        \left\Vert\begin{bmatrix}
            \big[\sum_{i=1}^N g_i(\barx_i^k)\big]_+\\
            \sum_{i=1}^N h_i(\barx_i^k)
        \end{bmatrix}\right\Vert 
        \leq \frac{\sqrt{N\lam_{1}(P_A)}}{k} \Big(\norm{\y^0}_A + C_1 + C_2\Big),\label{eq:thm:alg2_fe_result}
    \end{align}
    where $C_1$ and $C_2$ are defined in same way as in Lemma~\ref{lem:alg2_y_bounded}.
\end{theorem}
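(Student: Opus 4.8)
The plan is to derive this bound by chaining together two facts already established: the ergodic constraint-violation inequality of Proposition~\ref{prop:relations}, which holds for both DUCA and Pro-DUCA, and the boundedness of $\norm{\y^k}_A$ for Pro-DUCA furnished by Lemma~\ref{lem:alg2_y_bounded}. This parallels the proof of Theorem~\ref{thm:alg1_feasibility_error} in the DUCA case, except that the DUCA-specific boundedness estimate \eqref{eq:lem:bounded_z_y_y} is replaced by the Pro-DUCA estimate \eqref{eq:lem:alg2_y_bounded_result}, which requires one additional triangle-inequality step.

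First I would invoke part~3) of Proposition~\ref{prop:relations}, i.e.\ \eqref{eq:cons_vio_bounded_by_y_A_norm}, which bounds the left-hand side of \eqref{eq:thm:alg2_fe_result} by $\frac{\sqrt{N\lam_{1}(P_A)}}{k}\norm{\y^k-\y^0}_A$. The key point is that this proposition applies verbatim to the Pro-DUCA iterates: it depends only on the two update formulae \eqref{eq:alg1_y_update} and \eqref{eq:AMM_z_update} that DUCA and Pro-DUCA share, and it explicitly does not require the compactness Assumption~\ref{asp:compact}. Hence the feasibility error is controlled as soon as $\norm{\y^k-\y^0}_A$ is bounded. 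Next I would bound this quantity: since $A$ is symmetric positive semidefinite under Assumption~\ref{asp:matrices}, $\norm{\cdot}_A$ is a seminorm and the triangle inequality yields $\norm{\y^k-\y^0}_A\leq\norm{\y^k}_A+\norm{\y^0}_A$. Applying \eqref{eq:lem:alg2_y_bounded_result}, namely $\norm{\y^k}_A\leq C_1+C_2$, gives $\norm{\y^k-\y^0}_A\leq\norm{\y^0}_A+C_1+C_2$. Substituting back into \eqref{eq:cons_vio_bounded_by_y_A_norm} produces exactly the claimed bound \eqref{eq:thm:alg2_fe_result}.

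There is essentially no substantive obstacle remaining, since both ingredients are already in hand; the statement is a two-line consequence. The only points meriting a moment's care are that Proposition~\ref{prop:relations} is indeed valid for Pro-DUCA without compactness (which the proposition's hypotheses confirm) and that the triangle inequality remains legitimate for the $A$-seminorm even though $A$ is merely positive semidefinite rather than positive definite. Both are immediate, so the proof reduces to the displayed chaining.
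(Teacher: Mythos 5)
Your proposal is correct and follows exactly the paper's own argument: the paper's proof is the one-liner ``Combining \eqref{eq:cons_vio_bounded_by_y_A_norm} with Lemma~\ref{lem:alg2_y_bounded}, the result follows,'' and your chaining via the triangle inequality $\norm{\y^k-\y^0}_A\leq\norm{\y^k}_A+\norm{\y^0}_A$ merely makes explicit the implicit step connecting the bound on $\norm{\y^k}_A$ to the bound on $\norm{\y^k-\y^0}_A$. Your side remarks on why Proposition~\ref{prop:relations} applies to Pro-DUCA without Assumption~\ref{asp:compact} and why the $A$-seminorm triangle inequality is valid are accurate and consistent with the paper.
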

\begin{proof}
    Combining \eqref{eq:cons_vio_bounded_by_y_A_norm} with Lemma~\ref{lem:alg2_y_bounded}, the result follows.
\end{proof}

\begin{theorem}[objective error]\label{thm:alg2_objective_error}
    Suppose that Assumption~\ref{asp:problem_structure}-\ref{asp:matrices} and Assumption~\ref{asp:diagonal} hold and the sequences $\{\x^k, \y^k, \z^k\}_{k\geq 0}$ are generated by Pro-DUCA. Then for all $k\geq 1$, 
    \begin{gather}
        -\frac{R_1^\prime}{k} \leq f(\barbx^k)-f(\x^\star) \leq \frac{R_2^\prime}{k}, \label{eq:thm:alg2_oe_result}
    \end{gather}
    where $R_1^\prime=C_1(\norm{\y^0}_A + C_1 + C_2)$, $R_2^\prime = \frac{1}{2\rho}\norm{\v^0-\v^\star}_{\tilH^\dag}^2+\half\norm{\y^0}_A^2+\frac{\alpha}{2}\norm{\x^0-\x^\star}^2$, $C_1$ and $C_2$ are defined in same way as in Lemma~\ref{lem:alg2_y_bounded}.
\end{theorem}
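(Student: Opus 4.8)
The plan is to follow the same two-sided strategy used for the DUCA objective-error theorem, but substituting the Pro-DUCA analogues of the supporting results. I would split the claim into the lower bound $f(\barbx^k)-f(\x^\star)\geq -R_1'/k$ and the upper bound $f(\barbx^k)-f(\x^\star)\leq R_2'/k$, treating each independently. All of the analytical work has already been packaged into earlier results, so both halves reduce to a substitution plus some bookkeeping.

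For the lower bound, I would invoke item~4 of Proposition~\ref{prop:relations}, namely \eqref{eq:oe_bounded_by_fe}, which holds for both DUCA and Pro-DUCA and states that the ergodic objective gap is bounded below by $-\norm{y^\star}$ times the ergodic feasibility residual. I would then substitute the Pro-DUCA feasibility bound from Theorem~\ref{thm:alg2_feasibility_error}. Since $C_1=\sqrt{N\lam_1(P_A)}\,\norm{y^\star}$ by the definition in Lemma~\ref{lem:alg2_y_bounded}, the product $\norm{y^\star}\cdot\frac{\sqrt{N\lam_1(P_A)}}{k}(\norm{\y^0}_A+C_1+C_2)$ collapses exactly to $\frac{1}{k}C_1(\norm{\y^0}_A+C_1+C_2)=R_1'/k$, which is the claimed lower bound.

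For the upper bound, I would first pass from the ergodic objective error to the running average of per-iteration errors using convexity of $f$: $f(\barbx^k)-f(\x^\star)\leq \frac{1}{k}\sum_{l=1}^{k}\big(f(\x^l)-f(\x^\star)\big)$. The key ingredient is then Lemma~\ref{lem:alg2_objective_error}, whose right-hand side is a single-step decrement in three separate quadratics (the $\tilH^\dag$-weighted quantity in $\v$, the $A$-weighted quantity in $\y$, and the Euclidean quantity in $\x$). Summing this inequality over $l=0,\ldots,k-1$ telescopes each of the three differences; after dropping the three terminal terms $\frac{1}{2\rho}\norm{\v^k-\v^\star}_{\tilH^\dag}^2$, $\frac{1}{2}\norm{\y^k}_A^2$, and $\frac{\alpha}{2}\norm{\x^k-\x^\star}^2$, all of which are nonnegative, what remains is exactly $R_2'$. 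Dividing by $k$ yields the upper bound.

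I do not anticipate a serious analytical obstacle, since the heavy lifting resides in the cited results (Proposition~\ref{prop:relations}, Lemma~\ref{lem:alg2_objective_error}, Lemma~\ref{lem:alg2_y_bounded}, and Theorem~\ref{thm:alg2_feasibility_error}). The two points requiring care are purely a matter of accounting. First, one must verify that the telescoped right-hand side of Lemma~\ref{lem:alg2_objective_error} leaves only the nonnegative terminal quadratics to discard, so that the extra $\alpha$-dependent term, which is absent in the DUCA version, is correctly carried into $R_2'$. Second, one must confirm the identity $\norm{y^\star}\sqrt{N\lam_1(P_A)}=C_1$, which is what makes the lower-bound constant coincide with the stated $R_1'$.
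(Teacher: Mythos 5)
Your proposal is correct and follows essentially the same route as the paper's proof: the lower bound combines \eqref{eq:oe_bounded_by_fe} with Theorem~\ref{thm:alg2_feasibility_error} (the identity $C_1=\sqrt{N\lam_1(P_A)}\norm{y^\star}$ giving $R_1'$), and the upper bound uses convexity of $f$ followed by telescoping Lemma~\ref{lem:alg2_objective_error} and discarding the nonnegative terminal quadratics to obtain $R_2'$. Both bookkeeping points you flag are handled exactly as in the paper.
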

\begin{proof}
    The first half of \eqref{eq:thm:alg1_oe_result} is due to \eqref{eq:oe_bounded_by_fe} and Theorem~\ref{thm:alg2_feasibility_error}.
    The second half is proved by considering
    $
        f(\barbx^k)-f(\x^\star) \leq \frac{1}{k}\sum_{l=1}^k\big(f(\x^l)-f(\x^\star))
        \leq{} \frac{1}{k}\big(\frac{1}{2\rho}\norm{\v^0-\v^\star}_{\tilH^\dag}^2+\half\norm{\y^0}_A^2+\frac{\alpha}{2}\norm{\x^0-\x^\star}^2\big),
    $
    where the first inequality is by convexity of $f$ and the latter is due to Lemma~\ref{lem:alg2_objective_error}.
\end{proof}

\begin{remark}\label{rem:weight_matrices_effect}
    Now we analyze the effect of the weight matrices. Taking $\y^0=\z^0=\0$ and $\x^0=\0$, it is not hard to see that the constant terms in the feasibility error in \eqref{eq:thm:alg1_fe_result} and \eqref{eq:thm:alg2_fe_result} are upper-bounded by $2N\lam_1(P_A)\norm{y^\star}
    + \frac{\norm{\tilg(\x^\star)}}{\rho\lam_{N-1}(P_\tilH)}$ and $N\lam_1(P_A)\norm{y^\star} 
    + \sqrt{\frac{N\lam_1(P_A)}{\rho\lam_{N-1}(P_\tilH)}}\norm{\tilg(\x^\star)}
    + \sqrt{N\lam_1(P_A)}(\half+\sqrt{\alpha}\norm{\x^*})$ respectively. In terms of objective error, the constant terms on the right-hand sides of \eqref{eq:thm:alg1_oe_result} and \eqref{eq:thm:alg2_oe_result} are upper-bounded by $\frac{\norm{\tilg(\x^\star)}^2}{2\rho\lam_{N-1}(P_\tilH)}$ and $\frac{\norm{\tilg(\x^\star)}^2}{2\rho\lam_{N-1}(P_\tilH)}+\frac{\alpha\norm{\x^\star}^2}{2}$ respectively. Therefore, guided by these upper bounds, when selecting parameters, smaller $\lam_1(P_A)$ and larger $\lam_{N-1}(P_\tilH)$ would help the algorithm to converge faster.
\end{remark}

\section{Numerical Examples}\label{sec:numerical}

\begin{figure*}[!t]
    \centering
    \includegraphics[width=0.9\linewidth]{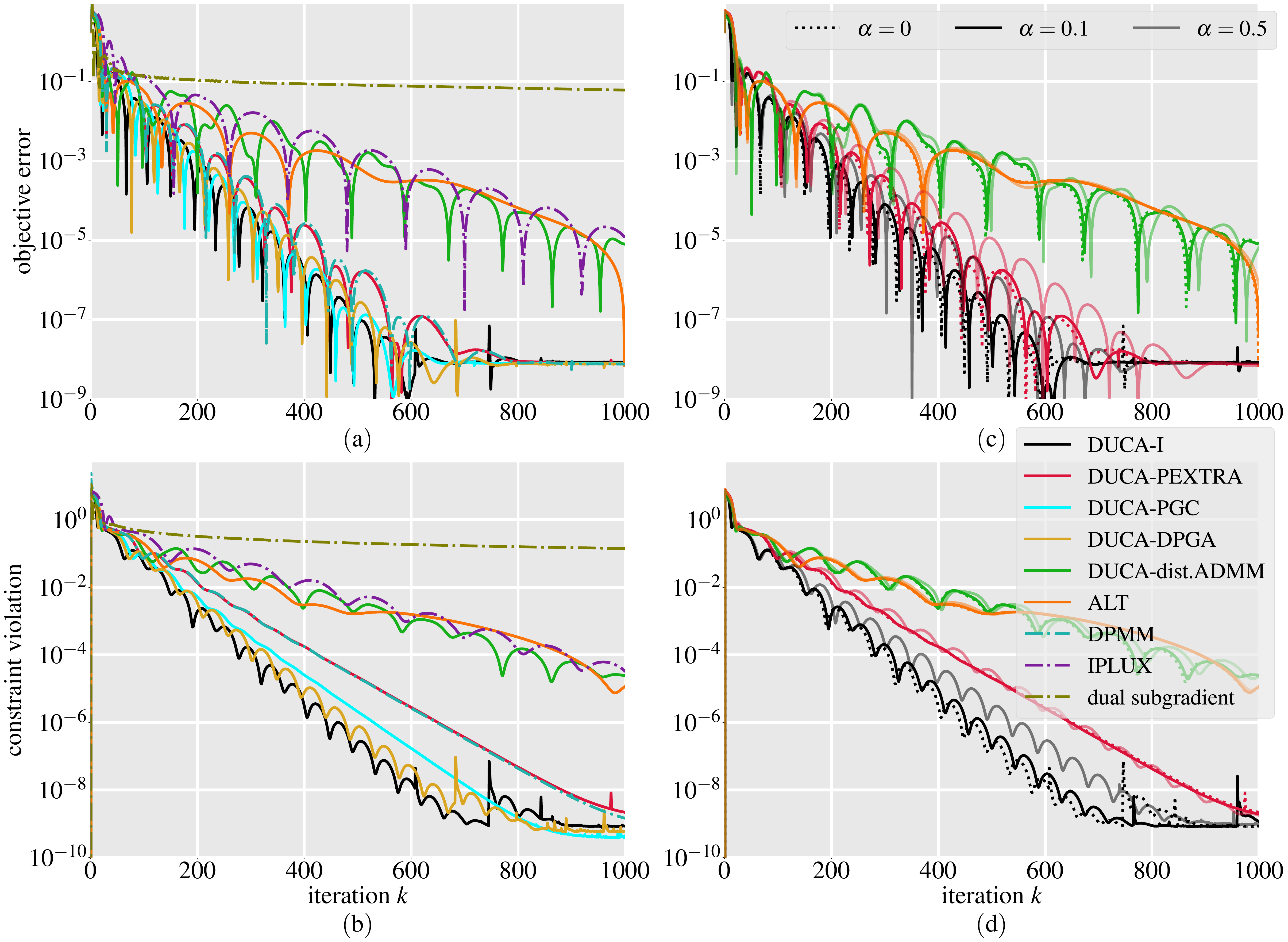}
    \caption{Convergence performance of DUCA with different parameter settings and four alternative methods (left), and performance of Pro-DUCA with different values of $\alpha$ (right).}
    \label{fig:example1}
\end{figure*}

In this section, we demonstrate the practical performance of DUCA and Pro-DUCA with different parameter settings.


We consider the following problem in the form of \eqref{prob:CC} with $N=20$, $d_i=d=3$, $m=1$ and $p=5$:
\begin{align}
    \begin{split}
        \underset{x_i\in X_i\ \forall i=1,\ldots, N}{\text{minimize}}\quad & 
        \sum_{i=1}^N x_i^T P_i x_i + Q_i^T x_i + \norm{x_i}_1 \allowdisplaybreaks\\
        \text{subject to}\quad &\sum_{i=1}^N \norm{x_i-a_i'}^2 - c'_i \leq 0,\allowdisplaybreaks \\
        &\sum_{i=1}^N B_i x_i = \0_p.
    \end{split}\label{eq:example1}
\end{align}
In the objective function, $P_i\in\R^{d\times d}$ is symmetric and semidefinite and $Q_i\in\R^d$. The local constraint sets is defined as $X_i=\{x_i\in\R^d\ |\ \norm{x-a_i}^2\leq c_i\}$, with $a_i\in\R^d$ and $c_i>\norm{a_i}^2$. In the coupled constraints, $a'_i\in\R^d$, $c'_i\in\R$ and $B_i\in\R^{p\times d}$. We also require that $\sum_{i=1}^N c_i' > \sum_{i=1}^N \norm{a_i'}^2$. It is clear that problem~\eqref{eq:example1} satisfies Assumption~\ref{asp:problem_structure} and \ref{asp:compact}. Besides, Assumption~\ref{asp:slater} is satisfied with $\tilde{x_i} = \0_d$ for all $i\in\calV$.

We execute DUCA and Pro-DUCA with different parameter settings, Augmented Lagrangian Tracking (ALT) \cite{FaP22}, DPMM \cite{GoZ23}, IPLUX \cite{WWL23} and the distributed subgradient method \cite{LWY21} to solve \eqref{eq:example1}. All parameter settings of DUCA are listed in Table~\ref{tab:param}, 
in which $M_{\mathcal{G}}$, $\mathcal{L}_1$ and $\mathcal{L}_2$ are graph-Laplacian-type matrices: $(M)_{ij}=(M)_{ji}<0$ for $\{i,j\}\in\mathcal{E}$, $(M)_{ii}=-\sum_{j\in\mathcal{N}_i} (M)_{ij}$, and $(M)_{ij}=0$ otherwise. In particular, for any $\{i,j\}\in\mathcal{E}$, $(M_{\mathcal{G}})_{ij}=-\frac{1}{\max\{|\calN_i|,|\calN_j|\}+1}$, $(\mathcal{L}_1)_{ij}=-2\rho'$ for some $\rho'>0$, and $(\mathcal{L}_2)_{ij}=-\frac{1}{2}\sqrt{\frac{cN}{|\calE|\min_k{|\calN_k|}}}$ for some $c>0$. The matrices $\Lambda_1$, $P_{D_2}$ and $P_{D_3}$ are diagonal, with  their $i$th diagonal elements defined as: $(\Lambda_1)_{ii}=(\mathcal{L}_1)_{ii}$, $(P_{D_2})_i=|\mathcal{N}_i|\sqrt{\frac{cN}{|\calE|\min_k{|\calN_k|}}}$, and  $(P_{D_3})_i=\sum_{j\in\calV}(|\calN_j|+1)(M_\calG)_{ij}^2$ respectively. Moreover, ALT is also converted into the form of DUCA, with $W_4=I-\frac{M_\calG}{2}$. All these settings satisfy their corresponding requirements (see \cite[Section~III]{WuL23}) as well as Assumption~\ref{asp:matrices} and \ref{asp:diagonal}.

In the simulation, we randomly generate a connected graph with 20 nodes and 40 links. The problem data is also randomly generated under the assumptions of \eqref{eq:example1}. All the algorithm parameters are fine-tuned within respective theoretical ranges and start from the same initial point.

Fig.~\ref{fig:example1} shows the performance of the aforementioned algorithms during 1000 iterations. The objective error is defined as $|f^\star - \sum_{i=1}^N f_i(x_i^k)|$, where $f^\star$ is the optimal value of \eqref{eq:example1} calculated by CVXPY \cite{CVXPY}. The constraint violation is the sum of $\sum_{i=1}^N\max\{\Vert x_i^k-a_i\Vert^2-c_i\}$, $\max\{\sum_{i=1}^N\Vert x_i^k-a_i'\Vert^2-c_i',0\}$ and $\Vert \sum_{i=1}^N B_ix_i^k\Vert$. The performance outcomes depicted in all sub-figures share the same underlying problem data.

\begin{table}
    \centering
    \begin{threeparttable}
        \caption{Parameter settings of DUCA in the simulation}
        \label{tab:param}
        \begin{tabular}{ccccc}
            \hline
                            &  $P_H$                &  $P_\tilH$            &  $P_D$                & $\rho$ \\
            \hline
            DUCA-I          & $M_\calG$             & $M_\calG$             & $2\rho \Lambda_\calG$ & $\rho$ \\
            DUCA-PEXTRA     & $\frac{1}{2}M_\calG$  & $\frac{1}{2}M_\calG$  & $\rho I$              & $\rho$ \\
            DUCA-PGC        & $\frac{1}{2}\calL_1$  & $\frac{1}{2}\calL_1$  & $\Lambda_1$           & $1$    \\
            DUCA-DPGA       & $\calL_2$             & $\calL_2$             & $P_{D_2}$         & $1$    \\
            DUCA-dist.ADMM  & $M_\calG^2$           & $M_\calG^2$           & $P_{D_3}$         & $\rho$ \\
            ALT             & $I-W_4^2$           & $(I-W_4)^2$           & $\rho I$              & $\rho$ \\
            \hline
        \end{tabular}
    \end{threeparttable}
\end{table}

Fig.~\ref{fig:example1}(a) and (b) highlight the superior performance of the single-exchange implementations of DUCA, i.e., DUCA-I (a novel design of parameters), DUCA-DPGA, DUCA-PGC and DUCA-PEXTRA in terms of convergence speed with respect to both optimality and feasibility, while DUCA-dist.ADMM also demonstrates a good performance that is comparable with ALT and IPLUX. Considering the communication cost per iteration, the single-exchange implementations of DUCA, DPMM, IPLUX, and the distributed dual subgradient method require each agent to send and receive $(m+p)=6$ real numbers; while DUCA-dist.ADMM and ALT require each agent to send and receive $2(m+p)=12$ real numbers. This suggests that all single-exchange implementations of DUCA also provide high communication efficiency.

Fig.~\ref{fig:example1}(c) and Fig.~\ref{fig:example1}(d) illustrate the effect of $\alpha$ on Pro-DUCA with parameter settings of DUCA-I, PGC, distributed ADMM and ALT. When $\alpha=0.1$, the performance of Pro-DUCA (solid lines) almost aligns with that of DUCA (dashed lines). Although the additional proximal term in Pro-DUCA could lead to better efficiency in the resolution of the subproblems in each iteration, setting $\alpha$ too large could compromise the convergence speed. Indeed, when $\alpha=0.5$, Pro-DUCA (solid translucent lines) performs worse, accompanied by increased oscillation. Therefore, to maximize the computational efficiency of Pro-DUCA, it is advisable to set $\alpha$ to an appropriate low level. 

\begin{remark}
    In practice, the single-exchange implementations usually outperform the double-exchange implementations. This is consistent with our convergence analysis: the constant term in the convergence rate results of the optimality error is proportional to $\frac{1}{\lambda_{N-1}(P_\tilH)}$ (see Remark~\ref{rem:weight_matrices_effect}), Compared with the single-exchange implementation that $P_\tilH=\calL$, we set $P_\tilH=\calL^2$ in the double-exchange implementation, making $\lambda_{N-1}(P_\tilH)$ much smaller, thus the constant term is larger and worse. The same analysis also applies to feasibility error and the effect of $P_A$: smaller possible value of $\lam_1(P_A)$ also makes the single-exchange implementation converge faster. 
\end{remark}





    

\section{Conclusion} \label{sec:conclusion}

We have presented the unified DUal Consensus Algorithm (DUCA) and its proximal variant, Pro-DUCA, to address distributed convex optimization with globally-coupled constraints. By leveraging a diverse range of parameter settings, DUCA and Pro-DUCA not only seamlessly adapt a variety of established consensus optimization methods to the dual of our problem, but also facilitate the development of new efficient algorithms for solving the problem. Moreover, their $O(1/k)$ convergence rates in terms of both optimality and feasibility are superior, providing new or stronger convergence results for a collection of existing methods. Simulations demonstrate the practical efficiency of the proposed methods.


\appendix

Notation used in the proofs: The affine functions in problem~\eqref{prob:CC} are represented by $h_i(x_i)=B_i x_i + c_i$, where $B_i\in\R^{p\times d_i}$ and $c_i\in\R^p\ \forall i\in\calV$. $\bsig^k$ and $\v^k$ are defined below Proposition~\ref{prop:relations}. We partition $\bsig^{k}, \z^{k}, \v^k\in\R^{N(m+p)}$ into $\bsig^{k}_\mu, \z^k_\mu, \v_\mu^k \in\R^{Nm}$ and $\bsig_\lam^{k}, \z_\lam^k, \v_\lam^k\in\R^{Np}$ as $\y_\mu$ and $\y_\lam$ (see below problem~\eqref{prob:D_prime}). Moreover, we also denote $\hatby^k:=D^{-1}(A\y^k-\tilH^\half\z^k)$ as in Proposition~\ref{prop:relations}.

Similar to \cite{Roc76}, we define two Lagrangians. The augmented Lagrangian $L:\R^{\sum d_i} \times \R^{N(m+p)} \rightarrow (-\infty, \infty]$ is defined as:
\begin{multline}
    L(\x,\y) := f(\x) + \half \Big(
        \norm{ \calP_\calK[\y+D^{-1}\tilg(\x)]}^2_D \\- \norm{\y}_D^2
    \Big) + \delta_X(\x).\label{eq:def_AL}
\end{multline} 
Here, $\delta_X(\cdot)$ is the indicator function with respect to $X$, where $\delta_X(x)=0$ if $x\in X$ and $\delta_X(x)=+\infty$ if $x\notin X$. Furthermore, the ordinary Lagrangian function is defined in an extended form:
\begin{align}
    \ell(\x,\y) := \begin{cases}
        f(\x) + \la\y, \tilg(\x)\ra, & \text{ if } \x\in X \text{ and } \y\in\calK,\\
        -\infty, &\text{ if } \x\in X \text{ and } \y\notin\calK,\\
        \infty, &\text{ if } \x\notin X.
    \end{cases}\label{eq:def_OL}
\end{align}
By direct computation, one can verify that when $\x\in X$, 
\begin{gather}
    L(\x,\y) = \max_{\y^\prime\in\R^{N(m+p)}}\left\{ \ell(\x,\y^\prime) - \half\norm{\y^\prime-\y}_D^2 \right\}, \label{eq:lagrangians_relation}
\end{gather}
where the maximum is attained uniquely at $\y^\prime=\calP_\calK[\y + D^{-1}\tilg(\x)]$. Therefore, plugging $\x^{k+1}$ and $\hatby^k$ into \eqref{eq:lagrangians_relation} yields
\begin{gather}
    L(\x^{k+1},\hatby^k) = \ell(\x^{k+1},\y^{k+1})-\half\norm{\y^{k+1}-\hatby^k}_D^2. \label{eq:lagrangians_relation_k}
\end{gather}
Moreover, when $\x\in X$ and $\y\in\calK$, we also have
\begin{align}
    \partial_\x L(\x, \y) = \partial_\x \ell(\x, \calP_\calK[\y + D^{-1}\tilg(\x)]). \label{eq:lagrangian_subgradients_relation}
\end{align}
The proof of this fact is essentially the same as the proof of Lemma~\ref{lem:equiv_update_x}.

\subsection{Proof of Lemma~\ref{lem:compute_x}} \label{app:lem:compute_x}
To prove this lemma, we first state two utilities.


\begin{lemma}\label{lem:convex_formula} 
    Let the diagonal matrix $D\in\R^{m\times m}$ be positive definite, each entry of $g(x)=[g_{1}(x),\ldots,g_{m}(x)]^T:$ $\R^n\rightarrow \R^{m}$ be convex, and $b\in\R^m$. Then following function
    \begin{gather*}
        J(x)=\norm{\left[g(x) + b\right]_+}_D^2.
    \end{gather*}  is convex.
\end{lemma}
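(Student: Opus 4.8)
The plan is to exploit the diagonal structure of $D$ to reduce the claim to a one-dimensional composition argument. Writing $D=\diag(d_1,\ldots,d_m)$ with each $d_i>0$, the squared weighted norm separates additively, so that
\[
    J(x)=\big\Vert[g(x)+b]_+\big\Vert_D^2=\sum_{i=1}^m d_i\big([g_i(x)+b_i]_+\big)^2.
\]
Since a nonnegative weighted sum of convex functions is convex, it suffices to show that each summand $x\mapsto\big([g_i(x)+b_i]_+\big)^2$ is convex.

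To this end, I would introduce the scalar function $\phi(t):=([t]_+)^2=(\max\{t,0\})^2$, which equals $0$ on $(-\infty,0]$ and $t^2$ on $(0,\infty)$. A direct check shows that $\phi$ is continuously differentiable with $\phi'(t)=2[t]_+$, which is nonnegative and nondecreasing; hence $\phi$ is both \emph{convex} and \emph{nondecreasing} on $\R$.

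Next, observe that for each $i$ the map $x\mapsto g_i(x)+b_i$ is convex in $x$, being the sum of the convex function $g_i$ and a constant. The result then follows from the standard composition rule: if the outer function is convex and nondecreasing and the inner function is convex, then their composition is convex. Applying this with outer function $\phi$ and inner function $g_i(\cdot)+b_i$ yields convexity of each summand $d_i\,\phi(g_i(x)+b_i)$, and summing establishes convexity of $J$.

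The argument is essentially routine; the only point requiring genuine care is verifying that $\phi$ is \emph{nondecreasing} in addition to convex. Monotonicity of the outer function is exactly the hypothesis that makes $\phi\circ(g_i+b_i)$ convex when the inner function is merely convex rather than affine, so this is where the positive-part squaring must be seen to interact correctly with the convexity of $g_i$. Once $\phi$ is shown to be convex and nondecreasing, the remaining steps are immediate.
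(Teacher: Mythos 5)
Your proof is correct and takes essentially the same route as the paper: both separate the diagonal weighted norm into the sum $\sum_i d_i([g_i(x)+b_i]_+)^2$ and then invoke the standard convex-nondecreasing-outer, convex-inner composition rule. The only (harmless) difference is grouping --- the paper first notes that $p_i(x)=[g_i(x)+b_i]_+$ is convex and then squares it using that $t\mapsto t^2$ is nondecreasing on $\R_+$ (citing \cite[Theorem~3.1.9]{Nes18a}), whereas you fold the positive part into the single outer function $\phi(t)=([t]_+)^2$, which is convex and nondecreasing on all of $\R$, slightly streamlining the monotonicity check.
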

\begin{proof}
    Note that each entry of $\left[g(x) + b\right]_+$, denoted as $p_i(x):=\left[g_i(x) + b_i\right]_+,\ 1\leq i\leq m$, is convex. Moreover, the univariate function $(\cdot)^2$ is non-decreasing on $\R_+$. Then by \cite[Theorem~3.1.9]{Nes18a}, $\left(p_i(x)\right)^2$ is convex. Thus $J(x) = \sum_i D_{ii}\left(p_i\left(x\right)\right)^2$ is also convex.
\end{proof}

\begin{lemma}\label{lem:equiv_update_x}
    Let the diagonal matrix $D\in\R^{m\times m}$ be positive definite, each entry of $g(x)=[g_{1}(x),\ldots,g_{m}(x)]^T:$ $\R^n\rightarrow \R^{m}$ be convex, and $b\in\R^m$. Then the subdifferential of the convex function $J(x)=\frac{1}{2}\norm{\big[g(x)+b]_+}_{D}^2$ at $x$ is equal to
    \begin{align*}
        \mathbf{G}(x)\Big[D\big(g(x)+b \big)\Big]_+,
    \end{align*}
    where $\mathbf{G}(x):=[\partial g_{1}(x), \ldots, \partial g_{m}(x)]\subseteq \R^{n\times m}$.
\end{lemma}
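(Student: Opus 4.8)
The plan is to reduce the computation to a collection of one-dimensional compositions, one per coordinate. First I would write $J$ as the separable sum $J(x)=\sum_{i=1}^m \phi_i(x)$, where $\phi_i(x):=\frac{1}{2}D_{ii}\big([g_i(x)+b_i]_+\big)^2$. Since each $g_i$ is real-valued convex (hence continuous) and $D_{ii}>0$, each $\phi_i$ is a finite-valued convex function by the same argument as in Lemma~\ref{lem:convex_formula}. Consequently the Moreau--Rockafellar sum rule applies with equality and yields $\partial J(x)=\sum_{i=1}^m \partial\phi_i(x)$, the sum being a Minkowski sum of convex sets.

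Next I would recognize each $\phi_i$ as a composition $\phi_i=h_i\circ(g_i+b_i)$, where $h_i:\R\to\R$ is the scalar outer function $h_i(t):=\frac{1}{2}D_{ii}([t]_+)^2$. A direct check shows that $h_i$ is convex, nondecreasing, and continuously differentiable with $h_i'(t)=D_{ii}[t]_+$; the only point needing care is differentiability at the kink $t=0$, where both one-sided derivatives vanish. Applying the subdifferential chain rule for a nondecreasing, differentiable convex outer function composed with a convex inner function (see, e.g., \cite{Nes18a}) then gives $\partial\phi_i(x)=h_i'(g_i(x)+b_i)\,\partial g_i(x)=D_{ii}[g_i(x)+b_i]_+\,\partial g_i(x)$, i.e. the nonnegative scalar $D_{ii}[g_i(x)+b_i]_+$ scaling the convex set $\partial g_i(x)$.

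Finally I would reassemble the sum into the claimed matrix--vector form. Because $D$ is diagonal with strictly positive diagonal, the $i$th entry of $[D(g(x)+b)]_+$ equals $[D_{ii}(g_i(x)+b_i)]_+=D_{ii}[g_i(x)+b_i]_+$. Hence $\sum_i \partial\phi_i(x)=\sum_i \big(D_{ii}[g_i(x)+b_i]_+\big)\partial g_i(x)=\mathbf{G}(x)[D(g(x)+b)]_+$, where $\mathbf{G}(x)=[\partial g_1(x),\ldots,\partial g_m(x)]$ is interpreted columnwise, which is precisely the asserted expression.

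The main obstacle is justifying the two calculus rules rather than the algebra: I must ensure the sum rule holds with equality (guaranteed here by finiteness and continuity of every $\phi_i$ on all of $\R^n$) and, more delicately, invoke the monotone chain rule correctly. Its exactness relies on $h_i$ being differentiable, so that $\partial h_i(g_i(x)+b_i)$ is the singleton $\{h_i'(g_i(x)+b_i)\}$, and on $h_i$ being nondecreasing, so that the scaling factor is nonnegative and the composition is convex. Once these standard facts are secured, matching coordinates is immediate, and the same decomposition-plus-chain-rule argument is exactly what underlies the claim \eqref{eq:lagrangian_subgradients_relation} invoked earlier.
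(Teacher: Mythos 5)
Your proposal is correct and, in substance, identical to the paper's proof: the paper writes $J(x)=k(p_1(x),\ldots,p_m(x))$ with the differentiable, componentwise nondecreasing outer function $k(y)=\half\norm{\max\{y,\0_{m}\}}_D^2$, $\nabla k(y)=D\max\{y,\0\}$, and applies the multivariate monotone chain rule \cite[Lemma~3.1.17]{Nes18a} in a single step to get $\partial J(x)=\sum_{i}\nabla_i k(p(x))\,\partial p_i(x)=\mathbf{G}(x)\big[D\big(g(x)+b\big)\big]_+$. Since $D$ is diagonal this outer function is separable, so your route---splitting $J$ into $\sum_i\phi_i$ and combining the Moreau--Rockafellar sum rule with the scalar chain rule for $h_i(t)=\half D_{ii}([t]_+)^2$---carries out exactly the same coordinatewise computation, resting on the same key observation that the squared-hinge outer function is differentiable (including at the kink) and nondecreasing, which makes the chain rule exact.
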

\begin{proof}
    Notice that $k(y)=\half\norm{\max\{y, \0_{m}\}}^2_D: \R^{m}\rightarrow\R$ is differentiable: $\nabla k(y) = D\max\{y, \0\}$, and monotone: if $y_1\geq y_2$ in the component-wise sense, then $k(y_1)\geq k(y_2)$. Let $p_i(x) := g_i(x) + b_i$, i.e., the $i$th entry of $g(x) + b$. Then $J(x) = k(p_1(x),\ldots,p_{m}(x))$. By \cite[Lemma~3.1.17]{Nes18a}, we have
    \begin{align*}
        \partial J(x) &= \sum_{i}^{m}\nabla_i k(p_1(x),\ldots,p_{m}(x))\partial p_i(x) \\
        &= \sum_{i}^{m} D_{ii}\max\{p_i(x), 0\}\partial p_i(x) \\
        &= [\partial g_1(x)\cdots \partial g_{m}(x)]D \left[g(x) + b\right]_+ \\
        &= \mathbf{G}(x)\Big[D\big(g(x)+b \big)\Big]_+,
    \end{align*}
\end{proof}


Now we prove Lemma~\ref{lem:compute_x}. $-\bsig^{k+1}\in N_\calK(\y^{k+1})$ implies that
\begin{align*}
    (\bsig_\mu^{k+1})_j &\begin{cases}
        = 0, &\text{ if } (\y^{k+1}_\mu)_j\geq 0 \\
        \geq 0, &\text{ if } (\y^{k+1}_\mu)_j = 0
    \end{cases} \text{ for all } 1\leq j\leq Nm, \\
    \bsig_\lam^{k+1} &\ \ = \0_{Np}.
\end{align*}
With this observation, by considering each entry in equality \eqref{eq:y_update_derivation1}, 
we obtains 
\begin{align}
    D\y^{k+1} &= \calP_\calK\Big[A\y^k-\tilH^\half\z^k+\tilg(\x^{k+1}) \Big], \label{eq:y_update_derivation2}\\
    -\bsig^{k+1} &= \calP_{\calK^\circ}\Big[A\y^k - \tilH^\half\z^k +\tilg(\x^{k+1})\Big], \label{eq:sig_update_derivation1}
\end{align}
where $\calK^\circ:=\{-\bsig\in\R^{N(m+p)}\ |\ -\bsig_\mu\in\R_-^{Nm}, -\bsig_\lam = \0_{Np}\}$ is the polar cone of $\calK$. 

Therefore, it is left to find an $\x^{k+1} \in X(\y^{k+1})$, such that \eqref{eq:y_update_derivation2} holds. From the optimal condition of $X(\y^{k+1}) = \arg\min_{\x\in X} f(\x) + \langle \y^{k+1}, \tilg(\x) \rangle$, we have
\begin{align}
    \0 \in \partial f(\x^{k+1}) + \tilG(\x^{k+1})\y^{k+1} + N_X(\x^{k+1}), 
    \label{eq:x_update_derivation1}
\end{align}
where $\tilG(\x^{k+1})=\diag(\tilG_1(x_1^{k+1}),$$\ldots,$$\tilG_N(x_N^{k+1}))$ and $\tilG_i(x_i^{k+1})$$=$$[\partial g_{i1}(x_i^{k+1}),$$\ldots,$$\partial g_{im}(x_i^{k+1}),$$B_i^T] \subseteq$$\R^{d_i\times(m+p)}$. 

For convenience, we separate subdifferentials of inequality constraints and gradients of equality constraints. Denote $\mathbf{G}(\x^{k+1})=[\partial g_{11},$$\ldots,$$\partial g_{1m},$$\ldots,$$\partial g_{N1},$$\ldots,$$\partial g_{Nm}](\x^{k+1})\subseteq \R^{(\sum d_i)\times Nm}$ (abusing the notation of $g_{ij}$: we extend its domain from $\R^{d_i}$ to $\R^{\sum d_i}$) and $\mathbf{H}=\diag(B_1^T, \ldots, B_N^T)\subseteq \R^{(\sum d_i)\times Np}$. Moreover, due to Assumption~\ref{asp:matrices}.4), the parameter matrices work on two groups of dual variables separately, so we also denote $A_\mu=P_A\otimes I_m$, $A_\lam=P_A\otimes I_p$ and $D_\mu, D_\lam, \tilH_\mu, \tilH_\lam$ likely. With these notations, substituting \eqref{eq:y_update_derivation2} into \eqref{eq:x_update_derivation1} yields
\begin{multline}
        \0 \in \partial f(\x^{k+1}) + \mathbf{H}D_\lam^{-1}\big(A_\lam\y_\lam^k-\tilH_\lam^\half\z_\lam^k+h(\x^{k+1}) \big)\\
        + \mathbf{G}(\x^{k+1})\Big[D_\mu^{-1}\big(A_\mu\y_\mu^k-\tilH_\mu^\half\z_\mu^k+g(\x^{k+1}) \big)\Big]_+ \\+ N_X(\x^{k+1}).
\end{multline}
By Lemma~\ref{lem:equiv_update_x} and optimality condition, this is further equivalent to 
\begin{multline}
        \x^{k+1} \in \arg\min_{\x\in X} \Big\{f(\x) + \frac{1}{2}\norm{
            \big[A_\mu\y_\mu^k-\v_\mu^k+g(\x)\big]_+
        }_{D_\mu^{-1}}^2 \allowdisplaybreaks\\
        \qquad\qquad\qquad\quad +\frac{1}{2}\norm{
            A_\lam\y_\lam^k-\v_\lam^k+h(\x)
        }_{D_\lam^{-1}}^2\Big\} \allowdisplaybreaks\\
        =\arg\min_{\x\in X} \Big\{f(\x) + \frac{1}{2}\norm{
            \calP_\calK\big[A\y^k-\tilH^\half\z^k
             +\tilg(\x)\big]
        }_{D^{-1}}^2 \Big\}. \label{eq:x_update_derivation2}
\end{multline}
The diagonal assumption is necessary for this equivalence, since it ensures the convexity (Lemma~\ref{lem:convex_formula}) and differentiability (Lemma~\ref{lem:equiv_update_x}) of $\norm{[\cdot]_+}_{D_\mu^{-1}}$.
Thus any $\x^{k+1}$ computed by \eqref{eq:x_update_derivation2} satisfies that $\x^{k+1}\in X(\y^{k+1})$, where $\y^{k+1}$ is given by formula \eqref{eq:y_update_derivation2}. Such an $\x^{k+1}$ does exist, since $X$ in \eqref{eq:x_update_derivation2} is compact by Assumption~\ref{asp:compact}. Therefore, $\x^{k+1}$, $\y^{k+1}$ and $\bsig^{k+1}$ computed by \eqref{eq:x_update_derivation2}, \eqref{eq:y_update_derivation2} and \eqref{eq:sig_update_derivation1} satisfy our requirements.

\subsection{Proof of Proposition~\ref{prop:z_star}} \label{app:prop:z_star}
$(\mu^\star, \lam^\star)$ is an optimal solution of problem~\eqref{prob:D} by definition. Consequently, $\y^\star=\1_N \otimes [(\mu^\star)^T,(\lam^\star)^T]^T$ is an optimal solution of problem~\eqref{prob:D_prime}. This is because a feasible solution of \eqref{prob:D_prime} is a consensus one, and any better solution of problem~\eqref{prob:D_prime} would provide a better solution to \eqref{prob:D}.

It remains to show that $\z^\star=(\tilH^\half)^\dag\tilg(\x^\star)$ 
is a geometric multiplier (so that $(\y^\star, \z^\star)$ is an optimal solution-geometric multiplier pair). By \cite[Proposition 6.1.5]{Ber16}, $\z^\star$ is a geometric multiplier if and only if 
\begin{align}
    \y^\star &\in \arg\min_{\y\in\calK}-q(\y) + \la\z^\star, \tilH^\half\y\ra, 
\end{align}
or equivalently, 
\begin{align}
    \0 \in \partial -q(\y^\star) + \tilH^\half(\tilH^\half)^\dag\tilg(\x^\star) + N_{\calK}(\y^\star). \label{eq:lem:z_star1} 
\end{align}
Moreover, by \eqref{eq:optimal_primal_dual_pair1}, $\x^\star\in\arg\min_{\x\in X}f(\x)+\la\mu^\star, (\1_N\otimes I_m)^Tg(\x)\ra + \la\lam^\star, (\1_N\otimes I_p)^Th(\x)\ra = \arg\min_{\x\in X}f(\x)+\la\y^\star,\tilg(\x)\ra$. Then $\tilg(\x^\star)\in\partial q(\y^\star)$. Now to prove \eqref{eq:lem:z_star1}, we only need to show 
\begin{gather}
    \0 \in \tilH^\half(\tilH^\half)^\dag\tilg(\x^\star) - \tilg(\x^\star) + N_\calK(\y^\star).\label{eq:lem:z_star2}
\end{gather}

Note that $\tilH^\half(\tilH^\half)^\dag$ is the orthogonal projection onto $\calR(\tilH^\half)=\calR(\tilH)=(\calN(\tilH))^\perp = S^\perp$, where $S^\perp$ is the orthogonal complement of $S$ in \eqref{eq:nullspace_S}: 
\begin{gather*}
    S^\perp = \{(y_1,\ldots,y_N)\in\R^{N(m+p)}\ |\ \sum_{i=1}^N y_i=\0_{m+p}\}.
\end{gather*}
Thus \begin{multline}
    \tilH^\half\z^\star=
    \tilH^\half(\tilH^\half)^\dag\tilg(\x^\star) = \tilg(\x^\star)-\1_N\otimes(\frac{1}{N}\sum_{i=1}^N \tilg_i(x_i^\star)).  \label{eq:lem:z_star_v_star_def}
\end{multline}
Now to prove \eqref{eq:lem:z_star2}, we only need to show $\1_N\otimes(\frac{1}{N}\sum_{i=1}^N \tilg_i(x_i^\star))\in N_\calK(\y^\star)$, i.e.,
\begin{align}
    \begin{bmatrix}
        \frac{1}{N}\sum_{i=1}^N g_i(x_i^\star) \\
        \frac{1}{N}\sum_{i=1}^N h_i(x_i^\star) \\
        \vdots \\
        \frac{1}{N}\sum_{i=1}^N g_i(x_i^\star) \\
        \frac{1}{N}\sum_{i=1}^N h_i(x_i^\star)
    \end{bmatrix} \in \begin{bmatrix}
        N_{\R_+^{m}}(\mu^\star) \\
        N_{\R^p}(\lam^\star) \\
        \vdots \\
        N_{\R_+^{m}}(\mu^\star) \\
        N_{\R^p}(\lam^\star)
    \end{bmatrix}.
\end{align}
This is done by noticing $\frac{1}{N}\sum_{i=1}^N h_i(x_i^\star)=\0_p\in\{\0_p\}=N_{\R^p}(\lam^\star)$ and the complementary slackness of $\sum_{i=1}^N g_i(x_i^\star)$ and $\mu^\star$.

Therefore, $(\y^\star, \z^\star)$ is an optimal solution-geometric multiplier pair. It follows that $\z^\star$ is dual optimal and there is no duality gap (see the proof in \cite[Proposition 6.1.5]{Ber16}).

\subsection{Proof of Proposition~\ref{prop:relations}} \label{app:prop:relations}
\begin{enumerate}
    \item The update formula of $\y^k$ \eqref{eq:alg1_y_update} implies $D\y^{k+1} = \calP_\calK\big[A\y^k-\tilH^\half\z^k+\tilg(\x^{k+1})\big]$, since $D\succ 0$ is diagonal; the definition of $\bsig^k$ \eqref{eq:sig_update} implies $-\bsig^{k+1} = \calP_{\calK^\circ}\big[A\y^k-\tilH^\half\z^k+\tilg(\x^{k+1})\big]$. Then we have \begin{align*}
        D\y^{k+1}-\bsig^{k+1} = A\y^k-\tilH^\half\z^k+\tilg(\x^{k+1}).
    \end{align*} This can be directly verified by looking at each entry in $\y^{k+1}$ and $\bsig^{k+1}$, since $\calK$ and $\calK^\circ$ have very simple structures: $\calK$ is a direct product of $\R$'s and $\R_+$'s, and $\calK^\circ$ is a direct product of $\{0\}$'s and $\R_-$'s.  
    \item \eqref{eq:relation_iterates} is equivalent to $D\y^{l+1}-A\y^l = \tilg(\x^{l+1})+\bsig^{l+1}-\tilH^\half\z^l$. Multiplying $(\1_N\otimes I_{m+p})^T$ to both sides of it yields 
    \begin{multline*}
        (\1_N\otimes I_{m+p})^T A(\y^{l+1}-\y^l) \\= (\1_N\otimes I_{m+p})^T\big(\tilg(\x^{l+1})+\bsig^{l+1}\big),
    \end{multline*} 
    since $D=A+\rho H$, and by Assumption~\ref{asp:matrices}.2), $(\1_N\otimes I_{m+p})^TH = (\1_N\otimes I_{m+p})^T\tilH = \0_{(m+p)\times N(m+p)}$. 
    

    \eqref{eq:relation_y_A_constraints} is obtained by summing the relation above over $l=0,1,\ldots,k-1$.

    \item We analyze equality constraints and inequality constraints separately. Notice that $\bsig_\lam^k=\0_p$ for all $k\geq 1$, due to the definition of $\bsig^k$ \eqref{eq:sig_update} and the structure of $\calK^\circ$. For equality constraints, we have
    \begin{align}
        &(\1_N\otimes I_p)^T h(\barbx^k) \overset{\text{(a)}}{=} (\1_N\otimes I_p)^T \frac{1}{k}\sum_{l=1}^k h(\x^{l}) \nonumber \allowdisplaybreaks\\
        \overset{\text{(b)}}{=}{} &(\1_N\otimes I_p)^T \frac{1}{k}\sum_{l=1}^k \left(h(\x^{l}) + \bsig_\lam^{l}\right) \nonumber \allowdisplaybreaks\\
        \overset{\text{(c)}}{=}{} &\frac{1}{k}(\1_N\otimes I_p)^T (P_A\otimes I_p) (\y_\lam^{k} - \y_\lam^0), \nonumber 
    \end{align}
    where (a) is because $h_i$'s are affine; (b) is due to $\bsig_\lam^l=\0_p$; (c) is by \eqref{eq:relation_y_A_constraints}. Thus $\norm{(\1_N\otimes I_p)^T h(\barbx^k)}=\frac{1}{k}\norm{(\1_N\otimes I_p)^T (P_A\otimes I_p) (\y_\lam^{k} - \y_\lam^0)}$.
    
    For inequality constraints, we have following entrywise relation:
    \begin{align}
        &(\1_N\otimes I_m)^T g(\barbx^k) \overset{\text{(a)}}{\leq} (\1_N\otimes I_m)^T \frac{1}{k}\sum_{l=1}^k g(\x^{l}) \nonumber\\
        \overset{\text{(b)}}{\leq}{} &(\1_N\otimes I_m)^T \frac{1}{k}\sum_{l=1}^k \left(g(\x^{l}) + \bsig_\mu^{l}\right) \nonumber\\
        \overset{\text{(c)}}{=}{} &\frac{1}{k}(\1_N\otimes I_m)^T (P_A\otimes I_m) (\y_\mu^{k} - \y_\mu^0) \nonumber 
    \end{align}
    where (a) is by convexity of $g_i$'s; (b) is due to nonnegativity of $\bsig^l$'s; (c) is by \eqref{eq:relation_y_A_constraints}. Thus $\big[(\1_N\otimes I_m)^T g(\barbx^k)\big]_+ \leq \frac{1}{k}\big[(\1_N\otimes I_m)^T (P_A\otimes I_m) (\y_\mu^{k} - \y_\mu^0)\big]_+$. Since both sides are nonnegative, taking the norm of them yields $\norm{\big[(\1_N\otimes I_m)^T g(\barbx^k)\big]_+} \leq \frac{1}{k}\norm{\big[(\1_N\otimes I_m)^T (P_A\otimes I_m) (\y_\mu^{k} - \y_\mu^0)\big]_+} \leq \frac{1}{k}\norm{(\1_N\otimes I_m)^T (P_A\otimes I_m) (\y_\mu^{k} - \y_\mu^0)}$. Combing this with the result of equality constraints yields $
        \left\Vert\begin{bmatrix}
            \big[\sum_{i=1}^N g_i(\barx_i^k)\big]_+ \\
            \sum_{i=1}^N h_i(\barx_i^k)
        \end{bmatrix}\right\Vert
        \leq{}  \frac{1}{k} \lVert(\1_N\otimes I_{m+p})^TA(\y^k-\y^0)\rVert 
        \leq{}  \frac{1}{k} \norm{(\1_N\otimes I_{m+p})} \norm{A^\half} \norm{\y^k-\y^0}_A 
        ={} \frac{\sqrt{N\lam_{1}(P_A)}}{k} \norm{\y^k-\y^0}_A.
    $

    \item By Assumption~\ref{asp:slater}, strong duality holds, and we have an optimal primal-dual solution pair $(\x^\star, y^\star)$, where $y^\star=[(\mu^\star)^T,(\lam^\star)^T]^T\in\R^{m+p}$ satisfies \eqref{eq:optimal_primal_dual_pair1}, thus
    \begin{align}
        &f(\x^\star) - f(\barbx^k) \leq \la y^\star, (\1_{N}\otimes I_{m+p})^T \tilg(\barbx^k) \ra \nonumber \allowdisplaybreaks\\
        ={} &\la\mu^\star, (\1_N\otimes I_{m})^T g(\barbx^k)\ra + \la\lam^\star, (\1_N\otimes I_{p})^T h(\barbx^k)\ra \nonumber \allowdisplaybreaks\\
        \overset{\text{(a)}}{\leq}{} &\la\mu^\star, \big[(\1_N\otimes I_{m})^T g(\barbx^k)\big]_+\ra \nonumber \allowdisplaybreaks\\
        &\qquad\qquad\qquad\qquad\quad\ \  +\la\lam^\star, (\1_N\otimes I_{p})^T h(\barbx^k)\ra \nonumber \allowdisplaybreaks\\
        ={} & \left\la y^\star, \begin{bmatrix}
            \big[(\1_N\otimes I_{m}) g(\barbx^k)\big]_+\\
            (\1_N\otimes I_{p}) h(\barbx^k)
        \end{bmatrix} \right\ra \nonumber \allowdisplaybreaks\\
        ={} & \left\Vert y^\star \right\Vert\cdot \left\Vert\begin{bmatrix}
            \big[(\1_N\otimes I_{m}) g(\barbx^k)\big]_+\\
            (\1_N\otimes I_{p}) h(\barbx^k)
        \end{bmatrix} \right\Vert, \nonumber
    \end{align}
    where (a) is due to $\mu^\star\geq \0_m$, ; (b) is by CBS inequality.

    \item We have 
    \begin{align*}
        &\la\y^{k+1}, D(\hatby^k - \y^{k+1}) +\v^\star\ra \allowdisplaybreaks\\ 
        \overset{\text{(a)}}{=}& \la\y^{k+1}, A\y^k - D\y^{k+1} -\v^k+\v^\star\ra \allowdisplaybreaks\\
        \overset{\text{(b)}}{=}& \la\y^{k+1}, A(\y^k-\y^{k+1})\ra 
        + \la\y^{k+1}, \v^\star-\v^{k+1}\ra \allowdisplaybreaks\\
        &\qquad +\la\y^{k+1}, -\rho{H}\y^{k+1} +\v^{k+1}-\v^k\ra \allowdisplaybreaks\\
        \overset{\text{(c)}}{=}& \la\y^{k+1}, A(\y^k-\y^{k+1})\ra 
        + \la\y^{k+1}, \v^\star-\v^{k+1}\ra \allowdisplaybreaks\\
        &\qquad +\la\y^{k+1}, \rho(\tilH-H)\y^{k+1}\ra,
    \end{align*}
    where (a) is by the definition of $\hatby^k$; (b) is by $D=A+\rho H$; (c) is due to \eqref{eq:v_update}. Notice that the third term is less than or equal to zero, since $\tilH\preceq H$, by Assumption~\ref{asp:matrices}.3). Then
    \begin{align*}
        &\la\y^{k+1}, D(\hatby^k - \y^{k+1}) +\v^\star\ra \\ 
        \overset{\text{}}{\leq}& \la\y^{k+1}, A(\y^k-\y^{k+1})\ra 
        + \la\y^{k+1}, \v^\star-\v^{k+1}\ra \\
        \overset{\text{(a)}}{=}& \la\y^{k+1}, A(\y^k-\y^{k+1})\ra \\
        &\qquad + \frac{1}{\rho}\la\tilH^\dag(\v^{k+1}-\v^k), \v^\star-\v^{k+1}\ra \\
        \overset{\text{(b)}}{\leq}& \frac{1}{2}(\norm{\y^k}_A^2 - \norm{\y^{k+1}}_A^2) \\
        &\qquad + \frac{1}{2\rho}(\norm{\v^k-\v^\star}_{\tilH^\dag}^2 - \norm{\v^{k+1}-\v^\star}_{\tilH^\dag}^2),
    \end{align*}
    where (a) and (b) are both due to Lemma~1 from \cite{WWL23}.
\end{enumerate}

\subsection{Proof of Lemma~\ref{lem:alg1_objective_error1}} \label{app:lem:alg1_objective_error1}
We denote $\hatby^k:=D^{-1}(A\y^k-\v^k)$. Consider
\begin{align} 
    &f(\x^{k+1}) -\half \left( \norm{\hatby^k}_D^2 - \norm{\y^{k+1}}_D^2 \right) \nonumber \allowdisplaybreaks\\
    \overset{\text{(a)}}{=}&f(\x^{k+1}) -\half \left( \norm{\hatby^k}_D^2 - \norm{\calP_\calK[\hatby^{k} + D^{-1}\tilg(\x^{k+1})]}_D^2 \right) \nonumber \allowdisplaybreaks\\
    \overset{\text{(b)}}{=}& L(\x^{k+1},\hatby^k) \nonumber \allowdisplaybreaks\\
    \overset{\text{(c)}}{=}& \ell(\x^{k+1},\y^{k+1}) - \half\norm{\y^{k+1}-\hatby^k}_D^2 \nonumber \allowdisplaybreaks\\
    \overset{\text{(d)}}{=}& q(\y^{k+1}) - \half\norm{\y^{k+1}-\hatby^k}_D^2, \label{eq:lem:oe_part1.1}
\end{align}
where (a) is due to the update formula of $\y^{k+1}$ \eqref{eq:alg1_y_update}; (b) is by the definition of the augmented Lagrangian \eqref{eq:def_AL}; (c) is due to \eqref{eq:lagrangians_relation_k}; (d) is because $q(\y^{k+1})=\min_{\x\in X}\ell(\x,\y^{k+1})$ and $\x^{k+1}\in X(\y^{k+1})$.

One the other hand, consider the optimal primal-dual pair $(\y^\star, \z^\star)$ provided by Proposition~\ref{prop:z_star}, which gives: $ -q(\y^{k+1}) 
    + \la \z^\star, \tilH^\half \y^{k+1} \ra 
    \geq -q(\y^\star) 
+ \la \z^\star, \tilH^\half \y^\star \ra$. Combining this with $\tilH^\half\y^\star=\0$ and $q(\y^\star) = f(\x^\star)$ (strong duality is provided by Assumption~\ref{asp:slater}), we have 
$
    q(\y^{k+1}) \leq f(\x^\star) + \la \v^\star, \y^{k+1} \ra.
$ 
Together with \eqref{eq:lem:oe_part1.1}, this yields
\begin{align}
    &f(\x^{k+1}) - f(\x^\star) \nonumber\\
    \leq& \half (\norm{\hatby^k}_D^2 - \norm{\y^{k+1}}_D^2 - \norm{\hatby^k-\y^{k+1}}_D^2) + \la\v^\star, \y^{k+1} \ra \nonumber\\
    =& \la \y^{k+1}, D(\hatby^k-\y^{k+1}) + \v^\star\ra. \label{eq:lem:oe_part1.2}
\end{align}

The proof is done by combing the relation above with \eqref{eq:relation_v_star_lyapunov}.

\subsection{Proof of Lemma~\ref{lem:alg2_objective_error}} \label{app:lem:alg2_objective_error}
\noindent\textbf{Step 1:} we show that $\ell(\x^{k+1},\y^{k+1})\leq \ell(\x^\star,\y^\star)+\la \y^{k+1}, \v^\star\ra - \la \alpha(\x^k-\x^{k+1}), \x^\star-\x^{k+1}\ra$.

Note that the update formula of $\x^{k+1}$ \eqref{eq:alg2_x_update} can be written as $\x^{k+1} = \arg\min_{\x\in \R^{\sum d_i}} L(\x, \hatby^{k}) + \frac{\alpha}{2}\norm{\x-\x^k}^2$, implying $\alpha(\x^k-\x^{k+1}) \in \partial_\x L(\x^{k+1}, \hatby^k)$. Combing this with \eqref{eq:lagrangian_subgradients_relation} gives $\alpha(\x^k-\x^{k+1}) \in \partial_\x \ell(\x^{k+1}, \y^{k+1})$. Considering that $\ell(\x,\y)$ is convex in $\x$, we further have $\ell(\x^\star, \y^{k+1}) \geq \ell(\x^{k+1}, \y^{k+1}) + \la \alpha(\x^k-\x^{k+1}), \x^\star-\x^{k+1}\ra$. 
Thus
\begin{align}
    &\ell(\x^{k+1}, \y^{k+1}) - \ell(\x^\star,\y^\star) \nonumber\\
    &\qquad\qquad\qquad\qquad + \la \alpha(\x^k-\x^{k+1}), \x^\star-\x^{k+1}\ra  \nonumber\\
    \leq{} &\ell(\x^\star,\y^{k+1}) - \ell(\x^\star,\y^\star) \nonumber\\
    ={} &\la \y^{k+1}, \tilg(\x^\star) \ra - \la \y^\star, \tilg(\x^\star)\ra \nonumber\\
    ={} &\la \y^{k+1}, \tilg(\x^\star) \ra - \la\mu^\star, (\1_N \otimes I_m)^T g(\x^\star)\ra \nonumber\\
    &\qquad\qquad\qquad\qquad\qquad- \la\lam^\star, (\1_N \otimes I_p)^T h(\x^\star)\ra \nonumber\\
    \overset{\text{(a)}}{=}{} &\la \y^{k+1}, \tilg(\x^\star) \ra \nonumber\\
    \overset{\text{(b)}}{=}{} &\la \y^{k+1}, \v^\star\ra + \la \y^{k+1}_\mu, \1_N\otimes(\frac{1}{N}\sum_{i=1}^N g_i(x_i^\star))\ra \nonumber\\
    &\qquad\qquad\qquad+ \la \y^{k+1}_\lam, \1_N\otimes(\frac{1}{N}\sum_{i=1}^N h_i(x_i^\star))\ra \nonumber \\
    \overset{\text{(c)}}{\leq}{} &\la \y^{k+1}, \v^\star\ra, \label{eq:lem:alg2_oe_part1}
\end{align}
where (a) is because the complementary slackness of $\mu^\star$ and $(\1_N \otimes I_m)^T g(\x^\star)$, and $(\1_N \otimes I_p)^T h(\x^\star)=\0_p$; (b) is due to the formula of $\v^\star=\tilH^\half\z^\star$ in \eqref{eq:lem:z_star_v_star_def}; (c) is by $\y_\mu^{k+1}\geq 0$, $\sum_{i=1}^N g_i(x_i^\star)\leq \0$ and $\sum_{i=1}^N h_i(x_i^\star)=\0$.

\bigskip

\noindent\textbf{Step 2.}
By \eqref{eq:lagrangians_relation_k}, $f(\x^{k+1})= \ell(\x^{k+1},\y^{k+1}) + \half(\norm{\hatby^k}_D^2 - \norm{\y^{k+1}}_D^2 - \norm{\y^{k+1}-\hatby^k}_D^2) = \ell(\x^{k+1},\y^{k+1}) + \la\y^{k+1}, D(\hatby^k-\y^{k+1})\ra$. Together with \eqref{eq:lem:alg2_oe_part1}, this yields
\begin{align*}
    &f(\x^{k+1}) - \ell(\x^\star,\y^\star) \allowdisplaybreaks\\
    \leq{} & \la\y^{k+1}, D(\hatby^k-\y^{k+1})+\v^\star\ra \\
        &\qquad\qquad\qquad\qquad+ \la \alpha(\x^k-\x^{k+1}), \x^{k+1}-\x^\star\ra \\
    \overset{\text{(a)}}{\leq}{} &  \frac{1}{2\rho}(\norm{\v^k-\v^\star}_{\tilH^\dag}^2 - \norm{\v^{k+1}-\v^\star}_{\tilH^\dag}^2)  + \frac{1}{2}(\norm{\y^k}_A^2  \allowdisplaybreaks\\
        &\qquad- \norm{\y^{k+1}}_A^2) + \la \alpha(\x^k-\x^{k+1}), \x^{k+1}-\x^\star\ra \\
    \overset{\text{(b)}}{\leq}{} &  \frac{1}{2\rho}(\norm{\v^k-\v^\star}_{\tilH^\dag}^2 - \norm{\v^{k+1}-\v^\star}_{\tilH^\dag}^2)  + \frac{1}{2}(\norm{\y^k}_A^2  \allowdisplaybreaks\\
        &\qquad- \norm{\y^{k+1}}_A^2)+ \frac{\alpha}{2}(\norm{\x^k-\x^\star}^2 - \norm{\x^{k+1}-\x^\star}^2),
\end{align*}
where (a) is due to \eqref{eq:relation_v_star_lyapunov}; (b) is because $\la\alpha(\x^k-\x^{k+1}), \x^{k+1}-\x^\star\ra = \frac{\alpha}{2}(\norm{\x^k-\x^\star}^2 - \norm{\x^{k+1}-\x^\star}^2- \norm{\x^k-\x^{k+1}}^2 )\leq \frac{\alpha}{2}(\norm{\x^k-\x^\star}^2 - \norm{\x^{k+1}-\x^\star}^2)$.

The proof is completed by noticing that $\ell(\x^\star,\y^\star) = f(\x^\star) + \la\y^\star, \tilg(\x^\star)\ra = f(\x^\star) + \la\mu^\star, (\1_N\otimes I_{m+p})^T\tilg(\x^\star)\ra = f(\x^\star)$.

\subsection{Proof of Lemma~\ref{lem:alg2_y_bounded}} \label{app:lem:alg2_y_bounded}
By Assumption~\ref{asp:slater}, strong duality holds and we have an optimal primal-dual solution pair $(\x^\star, y^\star)$ satisfying \eqref{eq:optimal_primal_dual_pair1}, thus $f(\x^\star)\leq f(\x^l) + \la y^\star, (\1_N\otimes I_{m+p})^T \tilg(\x^l)$, for all $l\geq 1$. Summing this relation over $l=1,2,\ldots,k$ yields
\begin{align}
    \sum_{l=1}^k \big(f(\x^\star)-f(\x^l)\big) \leq \la y^\star, (\1_N\otimes I_{m+p})^T \sum_{l=1}^k\tilg(\x^l) \ra. \label{eq:lem:alg2_y_bounded1}
\end{align}
Notice that $\la y^\star, (\1_N\otimes I_{m+p})^T \sum_{l=1}^k\bsig^l\ra = \la\mu^\star, (\1_N\otimes I_{m})^T \sum_{l=1}^k\bsig_\mu^l\ra \geq 0$, since $\bsig_\mu^l\geq 0$, $\bsig_\lam^l=\0$ and $\mu^\star\geq \0$. Together with \eqref{eq:lem:alg2_y_bounded1}, this yields
\begin{align}
    &\sum_{l=1}^k \big(f(\x^\star)-f(\x^l)\big) \nonumber\\
    \leq{} &\la y^\star, (\1_N\otimes I_{m+p})^T \sum_{l=1}^k\big(\tilg(\x^l)+\bsig^l\big) \ra \nonumber\\
    \overset{\text{(a)}}{=}{} & \la y^\star, (\1_N\otimes I_{m+p})^T A(\y^k-\y^0) \ra \nonumber\\
    \overset{\text{}}{=}{} & \la \1_N\otimes y^\star, A(\y^k-\y^0) \ra \nonumber\\
    \overset{\text{(b)}}{\leq}{} & \norm{\1_N\otimes y^\star} \norm{A^\half} \norm{A^\half(\y^k-\y^0)} \\
    \overset{\text{}}{=}{} & \sqrt{N\lam_{1}(P_A)}\norm{y^\star} \norm{\y^k-\y^0}_A \nonumber\\
    \overset{\text{(c)}}{\leq}{} & \sqrt{N\lam_{1}(P_A)}\norm{y^\star} (\norm{\y^k}_A+\norm{\y^0}_A) \label{eq:lem:alg2_y_bounded2}
\end{align}
where (a) is by \eqref{eq:relation_y_A_constraints}; (b) is by CBS inequality; (c) is by triangular inequality. 

One the other hand, Lemma~\ref{lem:alg2_objective_error} implies that $\sum_{l=1}^k\big(f(\x^l)-f(\x^\star)\big) \leq R^0 -R^k$, for all $k\geq 1$, where we denote $R^l:=\frac{1}{2\rho}\norm{\v^l-\v^\star}_{\tilH^\dag}^2 + \half\norm{\y^l}_A^2 + \frac{\alpha}{2}\norm{\x^l-\x^\star}^2$ for all $l\geq 0$. 
This together with \eqref{eq:lem:alg2_y_bounded2} gives $R^k-R^0 \leq C_1(\norm{\y^k}_A + \norm{\y^0}_A)$, where $C_1:=\sqrt{N\lam_{1}(P_A)}\norm{y^\star}$. Therefore, for any $k\geq 1$, we have
\begin{multline*}
    \half(\norm{\y^k}_A-C_1)^2 + \frac{\alpha}{2}\norm{\x^k-\x^\star}^2 + \frac{1}{2\rho}\norm{\v^k-\v^\star}_{\tilH^\dag}^2 \\ \leq \half(\norm{\y^0}_A+C_1)^2 + \frac{\alpha}{2}\norm{\x^0-\x^\star}^2 + \frac{1}{2\rho}\norm{\v^0-\v^\star}_{\tilH^\dag}^2,
\end{multline*} 
which directly derives \eqref{eq:lem:alg2_y_bounded_result}.

\bibliographystyle{IEEEtran}
\bibliography{mybib}

\begin{thebibliography}{10}
\providecommand{\url}[1]{#1}
\csname url@samestyle\endcsname
\providecommand{\newblock}{\relax}
\providecommand{\bibinfo}[2]{#2}
\providecommand{\BIBentrySTDinterwordspacing}{\spaceskip=0pt\relax}
\providecommand{\BIBentryALTinterwordstretchfactor}{4}
\providecommand{\BIBentryALTinterwordspacing}{\spaceskip=\fontdimen2\font plus
\BIBentryALTinterwordstretchfactor\fontdimen3\font minus \fontdimen4\font\relax}
\providecommand{\BIBforeignlanguage}[2]{{%
\expandafter\ifx\csname l@#1\endcsname\relax
\typeout{** WARNING: IEEEtran.bst: No hyphenation pattern has been}%
\typeout{** loaded for the language `#1'. Using the pattern for}%
\typeout{** the default language instead.}%
\else
\language=\csname l@#1\endcsname
\fi
#2}}
\providecommand{\BIBdecl}{\relax}
\BIBdecl

\bibitem{Ned20}
A.~Nedić, ``Distributed gradient methods for convex machine learning problems in networks: Distributed optimization,'' \emph{IEEE Signal Processing Magazine}, vol.~37, no.~3, pp. 92--101, 2020.

\bibitem{NAL18}
A.~Nedić and J.~Liu, ``Distributed optimization for control,'' \emph{Annual Review of Control, Robotics, and Autonomous Systems}, vol.~1, pp. 77--103, 2018.

\bibitem{NeS18}
Y.~Nesterov and V.~Shikhman, ``Dual subgradient method with averaging for optimal resource allocation,'' \emph{European Journal of Operational Research}, vol. 270, no.~3, pp. 907--916, 2018.

\bibitem{SLW15a}
W.~Shi, Q.~Ling, G.~Wu, and W.~Yin, ``{EXTRA}: An exact first-order algorithm for decentralized consensus optimization,'' \emph{SIAM Journal on Optimization}, vol.~25, no.~2, pp. 944--966, 2015.

\bibitem{SiJ16}
A.~Simonetto and H.~Jamali-Rad, ``Primal recovery from consensus-based dual decomposition for distributed convex optimization,'' \emph{Journal of Optimization Theory and Applications}, vol. 168, pp. 172--197, 2016.

\bibitem{FMG17}
A.~Falsone, K.~Margellos, S.~Garatti, and M.~Prandini, ``Dual decomposition for multi-agent distributed optimization with coupling constraints,'' \emph{Automatica}, vol.~84, pp. 149--158, 2017.

\bibitem{LLS20}
C.~Liu, H.~Li, and Y.~Shi, ``A unitary distributed subgradient method for multi-agent optimization with different coupling sources,'' \emph{Automatica}, vol. 114, no. 108834, pp. 1--13, 2020.

\bibitem{LWY21}
S.~Liang, L.~Y. Wang, and G.~Yin, ``Distributed dual subgradient algorithms with iterate-averaging feedback for convex optimization with coupled constraints,'' \emph{IEEE Transactions on Cybernetics}, vol.~51, no.~5, pp. 2529--2539, 2021.

\bibitem{LBN24}
H.~Liu, S.~Bose, H.~D. Nguyen, Y.~Guo, T.~T. Doan, and C.~L. Beck, ``Distributed dual subgradient methods with averaging and applications to grid optimization,'' \emph{Journal of Optimization Theory and Applications}, vol. 203, no.~2, pp. 1991--2024, 2024.

\bibitem{MaC17}
D.~Mateos-Núñez and J.~Cortés, ``Distributed saddle-point subgradient algorithms with {Laplacian} averaging,'' \emph{IEEE Transactions on Automatic Control}, vol.~62, no.~6, pp. 2720--2735, 2017.

\bibitem{CNS14}
T.-H. Chang, A.~Nedić, and A.~Scaglione, ``Distributed constrained optimization by consensus-based primal-dual perturbation method,'' \emph{IEEE Transactions on Automatic Control}, vol.~59, no.~6, pp. 1524--1538, 2014.

\bibitem{HMS24}
Y.~Huang, Z.~Meng, J.~Sun, and W.~Ren, ``A unified distributed method for constrained networked optimization via saddle-point dynamics,'' \emph{IEEE Transactions on Automatic Control}, vol.~69, no.~3, pp. 1818--1825, 2024.

\bibitem{YXJ24}
Y.~Huang, X.~Zeng, J.~Sun, and Z.~Meng, ``Distributed event-triggered algorithm for convex optimization with coupled constraints,'' \emph{Automatica}, vol. 170, no. 111877, pp. 1--10, 2024.

\bibitem{AyH19}
N.~S. Aybat and E.~Y. Hamedani, ``A distributed {ADMM}-like method for resource sharing over time-varying networks,'' \emph{SIAM Journal on Optimization}, vol.~29, no.~4, pp. 3036--3068, 2019.

\bibitem{LWY20}
S.~Liang, L.~Y. Wang, and G.~Yin, ``Distributed smooth convex optimization with coupled constraints,'' \emph{IEEE Transactions on Automatic Control}, vol.~65, no.~1, pp. 347--353, 2020.

\bibitem{HeZ24}
R.~Heusdens and G.~Zhang, ``Distributed optimisation with linear equality and inequality constraints using {PDMM},'' \emph{IEEE Transactions on Signal and Information Processing over Networks}, vol.~10, pp. 294--306, 2024.

\bibitem{GoZ23}
K.~Gong and L.~Zhang, ``Decentralized proximal method of multipliers for convex optimization with coupled constraints,'' 2023, \href{http://arxiv.org/abs/1409.0876}{arXiv:1409.0876}.

\bibitem{FaP23}
A.~Falsone and M.~Prandini, ``{Augmented Lagrangian Tracking} for distributed optimization with equality and inequality coupling constraints,'' \emph{Automatica}, vol. 157, no. 111269, pp. 1--13, 2023.

\bibitem{FaP22}
------, ``Distributed decision-coupled constrained optimization via {Proximal-Tracking},'' \emph{Automatica}, vol. 135, no. 109938, pp. 1--12, 2022.

\bibitem{WWL23}
X.~Wu, H.~Wang, and J.~Lu, ``Distributed optimization with coupling constraints,'' \emph{IEEE Transactions on Automatic Control}, vol.~68, no.~3, pp. 1847--1854, 2023.

\bibitem{SLW15b}
W.~Shi, Q.~Ling, G.~Wu, and W.~Yin, ``A proximal gradient algorithm for decentralized composite optimization,'' \emph{IEEE Transactions on Signal Processing}, vol.~63, no.~22, pp. 6013--6023, 2015.

\bibitem{FNN20}
A.~Falsone, I.~Notarnicola, G.~Notarstefano, and M.~Prandini, ``{Tracking-ADMM} for distributed constraint-coupled optimization,'' \emph{Automatica}, vol. 117, no. 108962, pp. 1--13, 2020.

\bibitem{AWL18}
N.~S. Aybat, Z.~Wang, T.~Lin, and S.~Ma, ``Distributed linearized alternating direction method of multipliers for composite convex consensus optimization,'' \emph{IEEE Transactions on Automatic Control}, vol.~63, no.~1, pp. 5--20, 2018.

\bibitem{MaO17}
A.~Makhdoumi and A.~Ozdaglar, ``Convergence rate of distributed {ADMM} over networks,'' \emph{IEEE Transactions on Automatic Control}, vol.~62, no.~10, pp. 5082--5095, 2017.

\bibitem{HoC17}
M.~Hong and T.-H. Chang, ``Stochastic proximal gradient consensus over random networks,'' \emph{IEEE Transactions on Signal Processing}, vol.~65, no.~11, pp. 2933--2948, 2017.

\bibitem{Ber09}
D.~Bertsekas, \emph{Convex Optimization Theory}, 1st~ed.\hskip 1em plus 0.5em minus 0.4em\relax Nashua, NH, USA: Athena Scientific, 2009.

\bibitem{SLY14}
W.~Shi, Q.~Ling, K.~Yuan, G.~Wu, and W.~Yin, ``On the linear convergence of the {ADMM} in decentralized consensus optimization,'' \emph{IEEE Transactions on Signal Processing}, vol.~62, no.~7, pp. 1750--1761, 2014.

\bibitem{WuL23}
X.~Wu and J.~Lu, ``A unifying approximate method of multipliers for distributed composite optimization,'' \emph{IEEE Transactions on Automatic Control}, vol.~68, no.~4, pp. 2154--2169, 2023.

\bibitem{Roc76}
R.~T. Rockafellar, ``Augmented {Lagrangians} and applications of the proximal point algorithm in convex programming,'' \emph{Mathematics of Operations Research}, vol.~1, no.~2, pp. 97--196, 1976.

\bibitem{Ber16}
D.~Bertsekas, \emph{Nonlinear Programming}, 3rd~ed.\hskip 1em plus 0.5em minus 0.4em\relax Nashua, NH, USA: Athena Scientific, 2016.

\bibitem{GoR01}
C.~Godsil and G.~Royle, \emph{Algebraic Graph Theory}.\hskip 1em plus 0.5em minus 0.4em\relax New York, NY, USA: Springer, 2001.

\bibitem{NOA17}
A.~Nedi\'{c}, A.~Olshevsky, and W.~Shi, ``Achieving geometric convergence for distributed optimization over time-varying graphs,'' \emph{SIAM Journal on Optimization}, vol.~27, no.~4, pp. 2597--2633, 2017.

\bibitem{YuN17}
H.~Yu and M.~J. Neely, ``A simple parallel algorithm with an {$O(1/t)$} convergence rate for general convex programs,'' \emph{SIAM Journal on Optimization}, vol.~27, no.~2, pp. 759--783, 2017.

\bibitem{CVXPY}
S.~Diamond and S.~Boyd, ``{CVXPY}: {A} {P}ython-embedded modeling language for convex optimization,'' \emph{Journal of Machine Learning Research}, vol.~17, no.~83, pp. 1--5, 2016.

\bibitem{Nes18a}
Y.~Nesterov, \emph{Lectures on Convex Optimization}, 2nd~ed.\hskip 1em plus 0.5em minus 0.4em\relax Cham, Switzerland: Springer, 2018.

\end{thebibliography}



\begin{IEEEbiographynophoto}{Zixuan Liu} received the B.S. degree in computer science and technology from ShanghaiTech University, Shanghai, China, in 2022. He is currently pursing the M.S. degree in computer science and technology at ShanghaiTech University, Shanghai, China. His research interests include distributed optimization and multi-agent decision making.
\end{IEEEbiographynophoto}

\begin{IEEEbiographynophoto}{Xuyang Wu} (Member, IEEE) received the B.S. degree in information and computing science from Northwestern Polytechnical University, Xi’an, China, in 2015, and the Ph.D. degree in communication and information systems from the University of Chinese Academy of Sciences, China, in 2020. He was a postdoctoral researcher at the Division of Decision and Control Systems, KTH, from 2020 to 2023. 

He is currently an assistant professor at the School of System Design and Intelligent Manufacturing, Southern University of Science and Technology. His research interests include distributed optimization and machine learning.
\end{IEEEbiographynophoto}

\begin{IEEEbiographynophoto}{Dandan Wang} received the B.S. degree in information and communication engineering from Donghua University, Shanghai, China, in 2018, and the Ph.D. degree in communication and information systems from the University of Chinese Academy of Sciences, China, in 2025. Her research interests include distributed optimization, online optimization, and their applications in wireless networks.
\end{IEEEbiographynophoto}

\newpage

\begin{IEEEbiographynophoto}{Jie Lu} (Member, IEEE) received the B.S. degree in information engineering from Shanghai Jiao Tong University, Shanghai, China, in 2007, and the Ph.D. degree in electrical and computer engineering from the University of Oklahoma, Norman, OK, USA, in 2011. 

She is currently an Associate Professor with the School of Information Science and Technology, ShanghaiTech University, Shanghai, China. Before she joined ShanghaiTech University in 2015, she was a Postdoctoral Researcher with the KTH Royal Institute of Technology, Stockholm, Sweden, and with the Chalmers University of Technology, Gothenburg, Sweden from 2012 to 2015. Her research interests include distributed optimization, optimization theory and algorithms, learning-assisted optimization, and networked dynamical systems.
\end{IEEEbiographynophoto}

\end{document}